\newcommand{\tmem}[1]{{\em #1\/}}
\newcommand{\tmop}[1]{\ensuremath{\operatorname{#1}}}
\newenvironment{enumeratenumeric}{\begin{enumerate}[1.] }{\end{enumerate}}
\newenvironment{enumerateroman}{\begin{enumerate}[i.] }{\end{enumerate}}
\newtheorem{prop}{Proposition}
\newtheorem{lem}{Lemma}
\newtheorem{thm}{Theorem}
\newtheorem{de}{Definition}
\DeclareMathOperator{\Res}{Res}    %% residue
\newcommand{\eps}{\varepsilon}     %% short for \varepsilon
\newcommand{\CC}{\mathbb{C}} 
\newcommand{\hL}{L} %L complete
\newcommand{\hGL}{G} % Group-like U(L) complete
\newcommand{\id}{\mathrm{id}}
\newcommand{\al}{\alpha}
\newcommand{\del}{\delta}
\newcommand{\dsp}{\displaystyle}
\begin{document}

\title{From dynamical systems to renormalization}\author{Fr\'ed\'eric Menous}

\address{D\'epartement de Math\'ematiques, B\^at. 425, UMR 8628, CNRS, Universit\'e Paris-Sud, 91405 Orsay Cedex, France }
\begin{abstract}
We study in this paper logarithmic derivatives associated to derivations on graded complete Lie algebra, as well as the existence of inverses. These logarithmic derivatives, when invertible, generalize the $\exp$--$\log$ correspondence between a Lie algebra and its Lie group. Such correspondences occur naturally in the study of dynamical systems when dealing with the linearization of vector fields and the non--linearizability of a resonant vector fields corresponds to the non--invertibility of a logarithmic derivative and to the existence of normal forms. These concepts, stemming from the theory of dynamical systems, can be rephrased in the abstract setting of Lie algebra and the same difficulties as in perturbative quantum field theory (pQFT) arise here. Surprisingly, one can adopt the same ideas as in pQFT with fruitful results such as new constructions of normal forms with the help of the Birkhoff decomposition. The analogy goes even further (locality of counter terms, choice of a renormalization scheme) and shall lead to more interactions between dynamical systems and quantum field theory.   
\end{abstract}

\maketitle

%{\tableofcontents}

\section{Introduction.}

Since the work of A. Connes and D.K Kreimer (see \cite{ck1}, \cite{ck2}) in perturbative quantum field theory (pQFT), it  has been possible to have a purely algebraic interpretation of some renormalization schemes, as the Birkhoff decomposition of regularized characters (with values in a commutative algebra of Laurent series), that is of elements of the group of algebra morphisms from a graded commutative Hopf algebra to the the algebra of Laurent series (see \cite{Man}, \cite{CM}). Many graded commutative Hopf algebras (shuffle, quasi shuffle, tree) do appear in this framework and, surprisingly, the same objects (groups of characters on Hopf algebras) arise in the study of analytic dynamical systems (see for example \cite{SNAG}), where changes of coordinates can be computed as elements either of the group of formal identity-tangent diffeomorphisms (namely characters over the Fa\`a di Bruno Hopf algebra) or of some subgroup that corresponds to characters over a combinatorial Hopf algebra of trees or words (using mould calculus, see \cite{SNAG}, \cite {FM}).

Such interactions should allow to enrich each domain with the ideas of the other and some steps have already been done in this direction (see \cite{MenDE} or \cite{MenBD}).

The origin of this paper comes from the observation that, in pQFT, the need for renormalization comes from the ill-definedness of some character on a Hopf algebra, which, after dimensional regularization, is replaced by a character with values in an algebra of Laurent series. The attempted character is then obtained with the help of the Birkhoff decomposition. The same phenomenon occur in dynamical systems, in the study of normal forms of vector fields (when the the change of coordinates that should linearize the vector field is ill-defined). Apart from some remarks in the literature (see for example \cite{Gal}), there does not seem that renormalization schemes have been used to compute, for example, normal forms.

We shall explain in this paper in an abstract  algebraic context -- a common framework to pQFT and dynamical systems -- how the ideas developed in pQFT (dimensional regularization, Birkhoff decomposition ...) provide a way to compute normal forms as well as other objects that play a crude role in dynamics. 

In the next section, we briefly recall the definition of complete graded Lie algebras and their link with commutative Hopf algebras. We define then in section \ref{secder} graded derivations on Lie algebras and their associated logarithmic derivative. Under some invertibilty condition on such derivations, we define the inverse of such logarithmic derivative (see also section \ref{secovin}) that generalizes the $\log$--$\exp$ correspondence between a Lie algebra and its Lie group.  Whenever such a derivation is not invertible, the theory of dynamical systems (see appendix \ref{secds}), provide, in the framework of Lie algebra, a way to study these logarithmic derivatives, using $d$-conjugacy and "normal" forms (see section \ref{secnoninv}). There are surprising similarities with the  need for renormalization in pQFT, the same ideas --regularization and Birkhoff decomposition -- provide a new method to compute normal forms (section \ref{norenor}) and the analogy goes even further, see section \ref{secmore}, since such notions as the locality of counter terms or the choice among several "renormalization schemes" also appear here. 

\section{Lie and Hopf algebras.}

In the sequel we consider complete graded Lie algebras $L^k$ (as well as
complete vector spaces or algebras) over a field $k$ of characteristic 0. In
other words, $L^k$ is the completion $\prod_{n \geqslant 1} L^k_n$ of a graded Lie
algebra $\bigoplus_{n \geqslant 1} L^k_n$ (with finite dimensional graded components $L^k_n$ and $[L^k_n, L^k_m] \subset L^k_{n + m}$)
with respect to the graduation. We write for short $x = \sum_{n \geqslant 1}
x_n$ ($x_n \in L^k_n$) and the Lie bracket extends to the completion $L^k$: if $x =
\sum_{n \geqslant 1} x_n$ and $y = \sum_{n \geqslant 1} y_n$ then $[x, y] =
\sum_{n \geqslant 1} z_n$ with
\[ z_n = \sum_{k = 1}^{n - 1} [x_k, y_{n - k}] \]
Under these hypotheses, the universal enveloping algebra $\mathcal{U}(L^k)$
of $L^k$ is a unital, graded associative algebra with a graduation $0$ component
$k. 1$. This algebra turns into a graded co-commutative Hopf algebra for the graded
co-product defined on $L^k$ by
\[ \forall x \in L^k, \hspace{1em} \Delta (x) = 1 \otimes x + x \otimes 1 \]
that extends, thanks to the universal property of Lie algebras, to
$\mathcal{U}(L^k)$ (see \cite{bour}). Using once again the graduation, we note
$\mathcal{U}^k= \prod_{n \geqslant 0} \mathcal{U}^k_n$ the completion of
$\mathcal{U}(L^k)$ and, as a graded map the co-product extends to a map from
$\mathcal{U}^k$ to the completion of $\mathcal{U}(L^k) \otimes \mathcal{U}(L^k)$
(that may only contain $\mathcal{U}^k \otimes \mathcal{U}^k$). $\mathcal{U}^k$ is
not necessarily a Hopf algebra but it contains $L^k$ as the Lie algebra of \textit{primitive elements} of $\mathcal{U}^k$, that is the elements $x$ of
$\mathcal{U}^k$ such that $\Delta (x) = 1 \otimes x + x \otimes 1$, but also the
Lie group $G^k$ of group-like elements of $\mathcal{U}^k$, that is the elements
$x$ of $\mathcal{U}^k$ such that $\Delta (x) = x \otimes x$.

Let us also remind that there is a one to one correspondence between $L^k$
and $G^k$, using the well-defined exponential an logarithm:
\[ \begin{array}{ccccc}
     \exp & : & \mathcal{U}^k_{\geqslant 1} & \rightarrow & 1
     +\mathcal{U}^k_{\geqslant 1}\\
     &  & x & \mapsto & 1 + \sum_{n \geqslant 1} \frac{x^n}{n!}
   \end{array} \hspace{2em} \begin{array}{ccccc}
     \log & : & 1 +\mathcal{U}^k_{\geqslant 1} & \rightarrow &
     \mathcal{U}^k_{\geqslant 1}\\
     &  & 1 + y & \mapsto & \sum_{n \geqslant 1} (- 1)^{n - 1} \frac{y^n}{n}
   \end{array} \]
where $\mathcal{U}^k_{\geqslant 1} = \prod_{n \geqslant 1} \mathcal{U}^k_n$.

For a given commutative $k$--algebra $\mathcal{A}$, one can also consider the Lie algebra $L^{\mathcal{A}}$, which inherits grading of $L^k$ and is defined as the completion, for the same graduation as  in $L^k$, of the Lie algebra $\bigoplus_{n\geqslant 1} \mathcal{A}\otimes L^k_n$ with the Lie bracket
\[
[a\otimes x, b\otimes y] =(ab)\otimes[x,y]
\]
Note that $L^{\mathcal{A}}$ is both a vector space over $k$ (whose graded component may be of infinite dimension) and a module over $\mathcal{A}$ (with graded components of finite dimension) 
and one can as well define $\mathcal{U}^{\mathcal{A}}$ and  $G^{\mathcal{A}}$. 

There is no difficulty to extend iteratively this construction. For example, if $k\subset \mathcal{A} \subset \mathcal{B}$, then $L^k\subset L^\mathcal{A} \subset L^\mathcal{B}$ and the same inclusions hold for the completed enveloping algebras as well as for the groups. On the same way, we could define $\left(L^\mathcal{A} \right)^\mathcal{B}$ as the completion of $\bigoplus_{n\geq 1} \mathcal{B}\otimes  L^{\mathcal{A}}_n$. 
In the sequel, we will adopt this Lie algebra point of view but there is no difficulty to identify such Lie algebra $L^\mathcal{A}$ (resp. Lie groups $G^\mathcal{A}$) with the Lie algebra (resp. group) of infinitesimal characters (resp. characters) on a graded commutative algebra, with values in $\mathcal{A}$. The reader can refer to \cite{EFGBP} on this matter. This simply means that the ideas stemming from the study of characters on  Hopf algebras can be easily translated in the context of such Lie algebras. 

Let us give, as an example, a "toy model" stemming from the theory of dynamical systems. When dealing with the simple differential equation:
\[
y'=a(x)y^2 \quad \text{with } a(x)\in\CC\{x\}
\]
the Lie algebra (over $\CC$) of differential operator (or vector fields), 
\begin{equation}
L=\bigoplus_{n\geqslant 1} \text{Vect}_{\CC } \{x^{n-1} y^2 \partial_y\}
\end{equation}
appears with a natural action on series of $\CC[[x,y]]$ and the elements of the completion of $L$ are formal operators $a(x)y^2 \partial_y$ with $a(x)\in \CC[[x]]$. For short we write $y^2 \CC[[x]]\partial_y$ this Lie algebra. Note that it coincides, for a different graduation, with the Lie algebra $L^{\mathcal A}$ with $\mathcal{A}=\CC[[x]]$ and $L^{\CC}=L_1^{\CC}=\text{Vect}_{\CC} \{ y^2 \partial_y \}$.

As another example related to dynamical systems, consider the Lie algebra $L^{k[t]}$, whose elements can be written:
\[
x=\sum_{n\geq 1}\sum_{l=0}^{N_n} t^l x_{n,l} \text{ with } N_n\in\mathbb{N}, x_{n,l}\in L^k_n.
\]

The element $\varphi^t = \exp (t x) \in G ^{k[t]}$ ($x \in L^k$) is the
unique solution of the differential equation
\[ \partial_t \varphi^t = \varphi^t .x \]
with the initial condition $\varphi^0 = 1$. This suggests to explore similar
"differential" equations that may lead to new correspondences between Lie algebras and Lie groups, since such equations appear naturally in the framework of differential equations
(see section \ref{secds}).

In the case of our toy model, the differential operator $\exp(a(x)y^2\partial_y)$ is a substitution automorphism:
\[\forall f(x,y)\in \CC[[x,y]],\quad \exp(a(x)y^2\partial_y) .f(x,y)=f(x,\exp(a(x)y^2\partial_y).y)
\]
with $\exp(a(x)y^2\partial_y).y=\frac{y}{1-a(x)y}$ and the map $\exp(a(x)y^2\partial_y)\mapsto \frac{y}{1-a(x)y}$ is an anti--isomorphism between the Lie group of $ y^2 \CC[[x]]\partial_y$ and the group (for the composition) of formal identity-tangent diffeomorphisms
\[ \left \{(x,y)\mapsto \left( x,\frac{y}{1-a(x)y}\right) \right\}.
\] 
The reader can check that $\exp (ta(x)y^2\partial_y)=\frac{y}{1-ta(x)y}=\alpha^t(x,y)$ is the flow of the differential equation $y'=a(x)y^2$ (see section \ref{secds} for details).

For short, in the sequel $L$ (resp. $\mathcal{U}$, $G$) will denote $L^\mathcal{A}$ (resp. $\mathcal{U}^\mathcal{A}$, $G^\mathcal{A}$). Note also that any $k$--linear endomorphism $u$ of $\mathcal{B}$  (resp. graded $k$--linear endomorphism of $L$) can be extended to a $k$--linear graded endomorphism of $L^\mathcal{B}$, still noted $u$ when there is no ambiguity, setting $u(b\otimes x)=u(b)\otimes x$ (resp. $u(b\otimes x)=b\otimes u(x)$). In order to avoid ambiguities, whenever such an endomorphism is defined on possibly many $k$--vector spaces (or algebras) $V$, we will write the image $u(V)$ and the corresponding kernel
\[
\ker_V u=\ker u \cap V \subset V.
\]

\section{Derivations and $d$-logarithms in Lie algebras.}\label{secder}

\subsection{Definitions and first properties.}

\begin{de}
  A derivation $d$ is a graded $k$--linear endomorphism on $L$ ($L^k$ or $L^\mathcal{A}$) such that
  \[ \forall x, y \in L, \hspace{1em} d ([x, y]) = [d (x), y] + [x, d (y)] \]
\end{de}

From the universal property of Lie algebras, as $d$ is graded
($d (L_n) \subset L_n$) it extends to a graded
derivation on $\mathcal{U}$:
\begin{enumerateroman}
  \item $d (1) = 0$,
  
  \item $d (\mathcal{U}_n) \subset \mathcal{U}_n$ and
  
  \item $\forall x, y \in \mathcal{U}, \hspace{1em} d (x.y) = d (x) .y + x.d
  (y)$.
\end{enumerateroman}

For such a derivation, we can define and study the ``differential'' equation
\begin{equation}
  d (\varphi) = \varphi .x
\end{equation}
\begin{prop}
  Any such derivation defines a map:
  \begin{equation}
    \begin{array}{lllll}
      \log_d & : & 1 +\mathcal{U}_{\geqslant 1} & \rightarrow &
      \mathcal{U}_{\geqslant 1}\\
      &  & \varphi & \mapsto & \varphi^{- 1} .d (\varphi)
    \end{array}
  \end{equation}
  that sends $G$ on $L$.
\end{prop}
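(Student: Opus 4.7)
The goal is to show that for $\varphi\in G$, the element $\varphi^{-1}\cdot d(\varphi)$ is primitive, i.e.\ lies in $L$. My plan is to reduce this to checking compatibility of $d$ with $\Delta$, and then to compute $\Delta(\varphi^{-1} d(\varphi))$ directly using that $\varphi$ is group-like.

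First I would verify the preliminary well-definedness points: $\varphi \in 1+\mathcal{U}_{\geqslant 1}$ is invertible in $\mathcal{U}$ by the usual geometric series, and since $d$ is graded with $d(1)=0$ one gets $d(\varphi)\in\mathcal{U}_{\geqslant 1}$, so $\varphi^{-1}d(\varphi)\in\mathcal{U}_{\geqslant 1}$. The real content is to prove it is primitive.

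The key lemma I would state and prove is that $d$ is a coderivation of $\mathcal{U}$, namely
\[
\Delta\circ d=(d\otimes\id+\id\otimes d)\circ\Delta.
\]
Both sides are $k$-linear maps $\mathcal{U}\to\mathcal{U}\otimes\mathcal{U}$ which, viewing $\mathcal{U}\otimes\mathcal{U}$ as a $\mathcal{U}$-bimodule through $\Delta$, are derivations (the left side because $\Delta$ is an algebra morphism and $d$ is a derivation, the right side by direct check using that $\Delta$ is an algebra morphism). Hence it suffices to verify the equality on the generating subspace $L$. For $x\in L$, $\Delta(x)=1\otimes x+x\otimes 1$, and since $d$ preserves $L$ (it is graded and a Lie derivation, so $d(x)\in L$), both sides equal $1\otimes d(x)+d(x)\otimes 1$. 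This will be the main technical step; once it is in place, everything else is algebra.

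Finally, for $\varphi\in G$ I have $\Delta(\varphi)=\varphi\otimes\varphi$, and by the above
\[
\Delta(d(\varphi))=(d\otimes\id+\id\otimes d)(\varphi\otimes\varphi)=d(\varphi)\otimes\varphi+\varphi\otimes d(\varphi).
\]
Since $\Delta$ is an algebra morphism, $\Delta(\varphi^{-1})=\varphi^{-1}\otimes\varphi^{-1}$, and therefore
\[
\Delta\bigl(\varphi^{-1}d(\varphi)\bigr)=(\varphi^{-1}\otimes\varphi^{-1})\bigl(d(\varphi)\otimes\varphi+\varphi\otimes d(\varphi)\bigr)=\varphi^{-1}d(\varphi)\otimes 1+1\otimes\varphi^{-1}d(\varphi),
\]
which shows that $\varphi^{-1}d(\varphi)$ is primitive, hence belongs to $L$. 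The only delicate point was the coderivation property of $d$; the rest is a clean two-line computation using group-likeness and the fact that $\Delta$ respects multiplication and inverses.
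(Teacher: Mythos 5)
Your proof is correct, but it takes a genuinely different route from the paper's. The paper proves the proposition via the Magnus formula (Lemma \ref{dexp}): writing $\varphi=\exp(\alpha)$ with $\alpha\in L$, it shows
\[
\log_d(\varphi)=\sum_{s\geqslant 0}\frac{(-1)^s}{(s+1)!}\,\tmop{ad}_{\alpha}^s(d(\alpha)),
\]
and since each term is an iterated bracket of elements of $L$ (using $d(\alpha)\in L$), the series lies in $L$ by completeness. You instead prove that $d$ is a coderivation of $\mathcal{U}$ --- by noting that $\Delta\circ d$ and $(d\otimes\id+\id\otimes d)\circ\Delta$ are both derivations into the $\mathcal{U}$-bimodule $\mathcal{U}\otimes\mathcal{U}$ agreeing on the generating subspace $L$ --- and then deduce from group-likeness of $\varphi$ that $\varphi^{-1}d(\varphi)$ is primitive, hence in $L$, since $L$ is exactly the set of primitive elements of $\mathcal{U}$ (a fact the paper records in its setup). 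Both arguments are sound. Yours is structurally cleaner: it isolates the single Hopf-algebraic fact that matters, avoids any series manipulation, and would apply verbatim to any graded coderivation, not only one induced by a Lie-algebra derivation. The paper's computation buys more for what follows, however: the explicit formula for $\log_d(\exp\alpha)$ and its graded form (\ref{adjgrad}) are reused constantly later (Theorem \ref{rightinv}, the normal-form recursions, the renormalization argument), so the paper needs Lemma \ref{dexp} in any case, whereas your argument establishes membership in $L$ without producing that formula. Two small points you should make explicit: the agreement-on-generators step lives on $\mathcal{U}(L)$ and extends to the completion because every map involved is graded; and $\Delta(\varphi^{-1})=\varphi^{-1}\otimes\varphi^{-1}$ follows from uniqueness of inverses since $\Delta$ is a unital algebra morphism.
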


The map is well defined since any $\varphi = 1 + \sum_{n \geq 1} \varphi_n \in 1 +\mathcal{U}_{\geqslant 1}$
is invertible for the product in $\mathcal{U}$ with an inverse
\[ \varphi^{- 1} = 1 + \sum_{s \geq 1} (- 1)^s \sum_{n_1, ..., n_s \geq 1}
   \varphi_{n_1} ... \varphi_{n_s} \]
and, thanks to the completion, $\log_d$ is well-defined in the complete
algebra $\mathcal{U}$. It remains to prove that it maps $G$ on $L$ and this
results comes from the Magnus formula (see \cite{KMP}):

\begin{lem}\label{dexp}
  Let $\varphi = \exp (\alpha) \in G$ ($\alpha \in L$), then
  \begin{equation}
    \log_d (\varphi) = \sum_{s \geqslant 0} \frac{(- 1)^s}{(s + 1) !}
    \tmop{ad}^s_{\alpha} (d (\alpha)) = \frac{e^{- \tmop{ad}_{\alpha}} - 1}{-
    \tmop{ad}_{\alpha}} (d (\alpha))\label{adj}
  \end{equation}
  where $\tmop{ad}_{\alpha} (x) = [\alpha, x] = \alpha x - x \alpha$ is the
  adjoint action of $\alpha$.
\end{lem}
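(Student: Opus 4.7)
The plan is to introduce an auxiliary scalar parameter $t$ by working in the Lie algebra $L^{\mathcal{A}[t]}$, set $\varphi(t)=\exp(t\alpha)\in G^{\mathcal{A}[t]}$, and derive an ordinary differential equation for $\Psi(t)=\varphi(t)^{-1}d(\varphi(t))$. The stated identity for $\log_d(\exp(\alpha))$ will then be read off as $\Psi(1)$.

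First, since $t\alpha$ commutes with itself, termwise differentiation of the exponential series gives $\partial_t\varphi(t)=\alpha\,\varphi(t)=\varphi(t)\,\alpha$. Differentiating the identity $\varphi(t)^{-1}\varphi(t)=1$ yields $\partial_t(\varphi(t)^{-1})=-\varphi(t)^{-1}\alpha$. Since $\partial_t$ commutes with $d$ (they act on disjoint variables) and $d$ satisfies Leibniz, a short computation cancels two terms and leaves
$$\partial_t\Psi(t) \;=\; \varphi(t)^{-1}\,d(\alpha)\,\varphi(t).$$
At this point I invoke the standard identity $e^{-X}Ye^{X}=e^{-\tmop{ad}_X}(Y)$, valid in the graded complete enveloping algebra $\mathcal{U}$, applied with $X=t\alpha$ and $Y=d(\alpha)$, to rewrite the right-hand side as $e^{-t\,\tmop{ad}_\alpha}(d(\alpha))$.

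Since $\Psi(0)=0$, integrating from $0$ to $1$ yields
$$\log_d(\varphi) \;=\; \Psi(1) \;=\; \int_0^1 e^{-t\,\tmop{ad}_\alpha}(d(\alpha))\,dt,$$
and expanding $e^{-t\,\tmop{ad}_\alpha}$ as a series and integrating termwise --- which is legitimate grade by grade because of the completeness of $L$ --- produces the desired closed form. As a byproduct one obtains the membership $\log_d(G)\subset L$ from the preceding proposition, since every summand $\tmop{ad}_\alpha^s(d(\alpha))$ is an iterated bracket in $L$ (using $d(L)\subset L$ and the fact that $L$ is closed under brackets).

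The main obstacle is essentially bookkeeping rather than a conceptual step: I must justify the termwise manipulations of the exponential series in the graded complete algebra $\mathcal{U}^{\mathcal{A}[t]}$ (commutation of $\partial_t$ with $d$ and with the product of $\mathcal{U}$, termwise differentiation and integration in $t$), and verify the conjugation identity $e^{-t\alpha}Ye^{t\alpha}=e^{-t\,\tmop{ad}_\alpha}(Y)$ as an equality of formal series in $t$ (e.g.\ by checking that both sides satisfy the same $t$-ODE with the same initial value, or by expanding and using $\tmop{ad}_X=L_X-R_X$ with $L_X,R_X$ commuting). All of these are routine consequences of the graded completeness set up in Section~2, so no genuinely difficult step arises.
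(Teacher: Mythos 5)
Your proposal is correct, and it is in fact more self-contained than what the paper does: the paper does not derive the identity (\ref{adj}) at all, but cites it as "the Magnus formula" from \cite{KMP}, and only supplies the two supplementary observations that the series converges in the complete algebra $\mathcal{U}_{\geqslant 1}$ and that each term $\tmop{ad}_{\alpha}^{s}(d(\alpha))$ lies in $L$, so that $\log_d(G)\subset L$. Your flow argument --- passing to $L^{\mathcal{A}[t]}$, showing $\partial_t\Psi(t)=\varphi(t)^{-1}d(\alpha)\varphi(t)=e^{-t\,\tmop{ad}_\alpha}(d(\alpha))$ with $\Psi(0)=0$, and integrating termwise over $[0,1]$ --- is the standard derivation of that formula, and every step you flag as "bookkeeping" is indeed harmless here: each graded component of $\exp(t\alpha)$ is a polynomial in $t$ because $\alpha\in L_{\geqslant 1}$, so termwise differentiation and formal integration are legitimate in characteristic $0$, and $\partial_t$ commutes with $d$ since $d$ is extended $\mathcal{A}[t]$-linearly. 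You also recover the paper's closing observation ($\log_d(\varphi)\in L$ because $d(L)\subset L$ and $L$ is closed under brackets), so nothing is missing. The only thing each approach "buys": the paper's citation is shorter and defers the combinatorics to \cite{KMP}, while your version makes the lemma independent of that reference at the cost of the auxiliary parameter $t$.
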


Since $\mathcal{U}$ is complete, the above
series converges in  $\mathcal{U}_{\geqslant 1}$ when $\alpha$ is in
$\mathcal{U}_{\geqslant 1}$. Moreover, since $\alpha$ is in $L$ ($\varphi \in G$), 
$d (\alpha)$ and $\tmop{ad}^s_{\alpha} (d (\alpha))$ belong to $L$ thus $\log_d
(\varphi)$ is in $L$.

Note that we would have similar formulas for $(d (\varphi)).\varphi^{- 1} $ ($ (d (\varphi)) .\varphi^{- 1} = \varphi . \log_d (\varphi) . \varphi^{-1}$) and, since $\varphi^{- 1} . \varphi = 1$, 
\[ 
\left \lbrace
\begin{array}{l}
\log_d (\varphi^{- 1}) = \varphi .d (\varphi^{- 1}) = -  d
   (\varphi) .\varphi^{- 1}= - \varphi . \log_d (\varphi) . \varphi^{-1} \\
    \\
    \log_d (\varphi_1 . \varphi_2) = \log_d (\varphi_2) + \varphi_2^{-1} \log_d
   (\varphi_1) \varphi_2\end{array} \right. . \]
This logarithmic derivative is a kind of generalization of the
logarithm and such $d$-logarithm do appear in the study of dynamical systems (see section \ref{secds}).

For a given $d$-logarithm from $G$ to $L$ it is natural to ask if there
exists an inverse.
% to such maps: For any $u$ in $L$, does there exists an element
%$\varphi$ of $G$ such that $\log_d (\varphi) = u$.

\subsection{The invertible case.}
%For the sake of simplicity, we only consider in this section the case of a derivation on the Lie algebra $L$. It is a matter of fact to check that these results still hold on $L^\mathcal{A}$ and a derivation on $L^\mathcal{A}$. Note that derivations on $L$ can be extended to $L^\mathcal{A}$: $d(a\otimes x)=a\otimes d(x)$.
%Here comes main invertibility theorem:
Whenever the derivation $d$ has a graded right inverse on $L$ ($L^k$ or $L^{\mathcal{A}}$), there is a positive answer to the previous question: 
\begin{thm} \label{rightinv}
If $d$ admits a graded right inverse $I$ on $\hL$ then for any $u \in \hL$ there exists $\varphi \in  \hGL$
such that $\log_d(\varphi)=u$.
\end{thm}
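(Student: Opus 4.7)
The plan is to seek $\varphi$ in the form $\varphi=\exp(\alpha)$ with $\alpha\in L$, and to construct $\alpha$ component by component along the graduation of $L$. Writing $\alpha=\sum_{n\geqslant 1}\alpha_n$ with $\alpha_n\in L_n$ and $u=\sum_{n\geqslant 1}u_n$, the equation $\log_d(\exp(\alpha))=u$ is, by Lemma~\ref{dexp}, equivalent to
\begin{equation*}
\sum_{s\geqslant 0}\frac{(-1)^s}{(s+1)!}\,\tmop{ad}_\alpha^s(d(\alpha))\;=\;\sum_{n\geqslant 1}u_n.
\end{equation*}
Since $d$ is graded and the bracket is of degree $0$, the degree-$n$ piece of the left-hand side has the shape
\begin{equation*}
d(\alpha_n)+R_n(\alpha_1,\ldots,\alpha_{n-1}),
\end{equation*}
where $R_n\in L_n$ is a Lie polynomial in $\alpha_1,\ldots,\alpha_{n-1}$ and their $d$-images alone (with the convention $R_1=0$). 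Indeed, extracting degree $n$ from $\tmop{ad}_\alpha^s(d(\alpha))$ for $s\geqslant 1$ involves multi-indices $(j_0,j_1,\ldots,j_s)$ with $j_i\geqslant 1$ and $\sum_i j_i=n$, which forces every $j_i\leqslant n-1$; the case $s=0$ contributes exactly $d(\alpha_n)$. The problem thus reduces to the triangular system
\begin{equation*}
d(\alpha_n)\;=\;u_n-R_n(\alpha_1,\ldots,\alpha_{n-1}),\qquad n\geqslant 1.
\end{equation*}

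I would then solve this system by induction on $n$. Assuming $\alpha_1,\ldots,\alpha_{n-1}$ have been constructed in $L_1,\ldots,L_{n-1}$ respectively so as to satisfy the system up to order $n-1$, the right-hand side $u_n-R_n(\alpha_1,\ldots,\alpha_{n-1})$ belongs to $L_n$. Using the graded right inverse $I$, I set
\begin{equation*}
\alpha_n\;:=\;I\bigl(u_n-R_n(\alpha_1,\ldots,\alpha_{n-1})\bigr)\;\in\;L_n,
\end{equation*}
which is graded-well-defined and satisfies $d(\alpha_n)=u_n-R_n$ thanks to $d\circ I=\id$ on $L$. By completeness of $L$, the series $\alpha=\sum_{n\geqslant 1}\alpha_n$ is a bona fide element of $L$, and $\varphi:=\exp(\alpha)\in G$ realizes $\log_d(\varphi)=u$.

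The only delicate step is to justify the triangular structure, namely that $R_n$ lies in $L_n$ and depends only on the strictly lower-order $\alpha_j$. Both properties follow from the closed form of Lemma~\ref{dexp} together with the index analysis above; there is no analytic or combinatorial obstacle, precisely because $I$ is assumed to be defined on all of $L$ and to be graded. I do not expect to need any further structural ingredient beyond this bookkeeping and the Magnus-type formula already established.
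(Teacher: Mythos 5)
Your proposal is correct and follows essentially the same route as the paper: the ansatz $\varphi=\exp(\alpha)$, the graded expansion of the Magnus formula from Lemma~\ref{dexp} into a triangular system $d(\alpha_n)=u_n-R_n(\alpha_1,\dots,\alpha_{n-1})$, and the recursive resolution via the graded right inverse $I$. The paper's proof is exactly this, written out explicitly for $n=1,2$ before invoking the recursion.
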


\begin{proof}
Let  $u \in \hL$. If we look for $\varphi=\exp(v)$
and set, thanks to the graduation, \[
v=\sum_{n\geq 1} v_n,\quad v_n\in L_n.
\]
Thanks to lemma \ref{dexp},
\begin{equation}\label{adjgrad}
d(v_n)+\sum_{k=1}^{n-1}\sum_{i=1}^{n-k}\frac{(-1)^i}{(i+1)!}\sum_{n_1+..+n_i=n-k}ad_{v_{n_1}}(
ad_{v_{n_2}}...(ad_{v_{n_i}}(d(v_{k})))...)=u_n
\end{equation}
In $L_1$ we have,
\[ d(v_1)=u_1 \] and we define $v_1=I(u_1)\in \hL$. For $n=2$,
\[
d(v_2)-\frac{1}{2} [v_1,d(v_1)]=u_2
\]
and we have the solution
\[
v_2=I(u_2)-\frac{1}{2} I([I(u_1),u_1])
\]
and the proof follows recursively: $v=\sum_{n\geq 1} v_n$ is
in $L$ and $\varphi=\exp(v)$ is a solution of $\log_d(\varphi)=u\in
\hGL$.
\end{proof}
If $d$ is invertible on $\hL$ then the solution $\varphi$ of $\log_d \varphi=u$ is
unique and noted
\[ \varphi =\exp_d (u). \]
The reader can notice that in the case of $L^k$, since we assumed that the graded components $L^k_n$ are of finite dimension, a derivation has a right inverse if and only if it is invertible. Otherwise,
\begin{prop} For any derivation $d$, $\ker_L d$ is a sub-Lie algebra
  of $L$ ($\{ 0\}$ when $d$ is invertible) that gives rise to a
  subgroup of $G$:
\[ \hGL^0=\{\varphi \in \hGL \mid d(\varphi)=0\}
\]
and $\varphi_1$ and $\varphi_2$ are two solutions of $\log_d \varphi =u$ if and only if
$\varphi_2\varphi_1^{-1}\in  \hGL^0$
\end{prop}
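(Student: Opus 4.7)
The plan is to verify each of the three claims in turn, using nothing beyond the Leibniz rule for $d$ and the formulas for $\log_d$ already recorded in the excerpt.

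First I would handle $\ker_L d$. The $k$-linearity is immediate from the $k$-linearity of $d$, and for $x,y \in \ker_L d$ the derivation property gives $d([x,y]) = [d(x),y] + [x,d(y)] = 0$, so $[x,y] \in \ker_L d$. When $d$ is invertible on $L$ the kernel is obviously trivial. Next, for the set $G^0$, I would check the three subgroup axioms in $G$: one has $d(1) = 0$ so $1 \in G^0$; for $\varphi_1,\varphi_2 \in G^0$, Leibniz in $\mathcal{U}$ gives $d(\varphi_1\varphi_2) = d(\varphi_1)\varphi_2 + \varphi_1 d(\varphi_2) = 0$; and differentiating $\varphi\varphi^{-1} = 1$ yields $d(\varphi^{-1}) = -\varphi^{-1} d(\varphi)\varphi^{-1}$, which vanishes when $\varphi \in G^0$. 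Group-likeness is preserved because $G^0$ is defined as a subset of $G$.

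The heart of the statement is the third claim. I would compute $\log_d$ of a quotient using the identity $d(\varphi^{-1}) = -\varphi^{-1}d(\varphi)\varphi^{-1}$ recorded above. If $\log_d \varphi_1 = \log_d \varphi_2 = u$, set $\psi = \varphi_2\varphi_1^{-1}$ and apply Leibniz:
\[
d(\psi) = d(\varphi_2)\varphi_1^{-1} + \varphi_2 d(\varphi_1^{-1}) = \varphi_2 u \varphi_1^{-1} - \varphi_2 \varphi_1^{-1} d(\varphi_1)\varphi_1^{-1} = \varphi_2 u \varphi_1^{-1} - \varphi_2 u \varphi_1^{-1} = 0,
\]
so $\psi \in G^0$. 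Conversely, if $\psi = \varphi_2\varphi_1^{-1} \in G^0$, write $\varphi_2 = \psi \varphi_1$ and compute
\[
d(\varphi_2) = d(\psi)\varphi_1 + \psi d(\varphi_1) = \psi \varphi_1 u = \varphi_2 u,
\]
whence $\log_d(\varphi_2) = \varphi_2^{-1}d(\varphi_2) = u = \log_d(\varphi_1)$.

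There is no real obstacle here; the only place one has to be mildly careful is in differentiating $\varphi\varphi^{-1}=1$ inside the completed enveloping algebra $\mathcal{U}$, but that is already implicit in the Leibniz extension of $d$ to $\mathcal{U}$ and the explicit inverse formula given in the paper. The argument is equally valid in $L^k$ and in $L^{\mathcal{A}}$ since the derivation, the product and the coproduct are all extended in the obvious $\mathcal{A}$-linear way.
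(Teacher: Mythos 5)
Your proposal is correct and follows essentially the same route as the paper: the paper deduces $d(\psi)=0$ for $\psi=\varphi_2\varphi_1^{-1}$ from its previously recorded identity $\log_d(\varphi_1\varphi_2)=\log_d(\varphi_2)+\varphi_2^{-1}\log_d(\varphi_1)\varphi_2$, which is exactly the Leibniz computation you carry out inline. Your explicit verification of the sub-Lie-algebra and subgroup axioms, and of the converse direction, only spells out what the paper leaves implicit.
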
 
\begin{proof}
Consider $\psi=\varphi_2\varphi_1^{-1}$, that is $\psi\varphi_1=\varphi_2$,
then
\[ \log_d(\psi\varphi_1 )= \log_d(\varphi_1)+\varphi_1^{-1} \log_d(
\psi)\varphi_1= u+\varphi_1^{-1}  \log_d(
\psi)\varphi_1=u
\]
thus
\[\psi^{-1} d(\psi) =\log_d(
\psi)=0\]
and $d(\psi)=0$.
\end{proof}
 Note that, using once again lemma \ref{dexp}, we get:
\begin{equation}
 \hGL^0=\{\varphi=\exp(v) \in \hGL \mid v\in \ker_L d\}=\exp(\ker_L d)
\end{equation}

\subsection{The Dynkin operator, the logarithm and other examples.}

A classical (and universal) example of invertible derivation in
graded Lie algebras is the derivation associated to the graduation:
\begin{equation}
\forall n\geq 1,\ \forall x\in L_n,\quad Y(x)=nx. 
\end{equation}
In this case, $\log_Y$ is the Dynkin operator, which is a
bijection between $L$ and $G$ (see \cite{EFGBP}). 

Let us just point out another
example that relates the Dynkin operator and the usual logarithm. For a given complete graded Lie algebra $L=L^\mathcal{A}$ over a commutative $k$--algebra $\mathcal{A}$,  consider the Lie algebra $L^{\mu \mathcal{A}[\mu]}$. This algebra inherits the graduation of $L$ and the graded derivation $\mu \partial_{\mu}$ is invertible. If $x=\sum x_n\in L$, then $\mu x$ and $\mu^Y(x)=\sum
\mu^n x_n$ are in $L^{\mu \mathcal{A}[\mu]}$ and:
\begin{eqnarray*}
\exp_{\mu \partial_{\mu}}(\mu x)&=&\exp(\mu x)\\
\exp_{\mu \partial_{\mu}}(\mu^Y(x))&=&\mu^Y(\exp_Y(x))
\end{eqnarray*}
so that the classical exponential as well as $\exp_Y$
are related to the same differential equation in $L^{\mu \mathcal{A}[\mu]}$:
\[ \mu \partial_{\mu}\varphi=\varphi . x\]

We will come back to the interaction between the logarithm and the
Dynkin operator in the following section but let us end by the
definition of a wide class of derivations that appear in dynamical
systems. Suppose that $L$ is a sub-algebra of a Lie algebra
$L_0\oplus L$ where $L_0$ is a Lie algebra of graduation $0$, then
for any $x_0\in L_0$, the adjoint action of $x_0$ on $L$ defines a
graded derivation on $L$. Whenever this adjoint action is invertible,
we have a unique solution to the equation
\[
ad_{x_0}(\varphi)= [x_0,\varphi]=\varphi.x
\]
which is fundamental is the study of vector fields (see section \ref{secds}). But
conversely, the study of vector fields provides many ideas to deal
with the case of non invertible derivations.  Let us illustrate this on the toy model $L=y^2 \CC[[x]]\partial_y$ with $L_0=\{\alpha x\partial_x+\beta y \partial_y,\ \alpha,\beta\in \CC\}$. The associated derivation $d=ad_{X_{\alpha,\beta}}$ (with $X_{\alpha,\beta}=\alpha x\partial_x+\beta y \partial_y$) gives for $n\geqslant 0$
\[
\begin{array}{rcl}
d(x^ny^2\partial_y) &=& [\alpha x\partial_x+\beta y \partial_y,x^ny^2\partial_y]=(\alpha n+\beta)x^n y^2\partial_y \\
d(a(x)y^2 \partial_y) &=& (\alpha xa'(x)+\beta a(x))y^2\partial_y
\end{array}
\] and the derivation is invertible if $\alpha n+\beta$ does not vanish for $n\geqslant 0$ or, equivalently, when, for any $b(x)\in \CC[[x]]$, there exists a unique formal solution $a(x)$ to the equation
\[
 \alpha xa'(x)+\beta a(x)=b(x).
 \]
More precisely, for a fixed $X_{\alpha,\beta}=\alpha x\partial_x+\beta y \partial_y\in L_0$ and $y^2b(x)\partial_y$, if we look for an element $\exp(a(x)y^2 \partial_y)$ in the Lie group such that
\[
d(\exp(a(x)y^2 \partial_y))=\exp(a(x)y^2 \partial_y) . y^2 b(x)\partial_y
\]
then, since any Lie bracket in $y^2\CC[[x]]\partial_y$ vanishes, Theorem \ref{rightinv} ensures that $ \exp(a(x)y^2 \partial_y)$ is given by the solution of
\[
\alpha xa'(x)+\beta a(x)=b(x) 
\]
From the dynamical systems point of view, the reader can check that, when identifying $\exp(a(x)y^2 \partial_y)$ to the formal diffeomorphism 
\[
\psi(x,y)=(x,z)=\left(x,\frac{y}{1-a(x)y} \right),
\]
the (formal) solutions of the differential system
\[
\left\lbrace
\begin{array}{rcl}
\dsp \frac{dx}{dt} &=&\alpha x\\
\dsp \frac{dy}{dt} &=&\beta y
\end{array} \right.
\]
are in one--to--one correspondence with the solutions of 
  \[
\left\lbrace
\begin{array}{rcl}
\dsp \frac{dx}{dt} &=&\alpha x\\
\dsp \frac{dz}{dt} &=&\beta z +b(x)z^2
\end{array}\right. 
\]
since
\[
\begin{array}{rcl}
\dsp \frac{dz}{dt}  &=&\dsp \frac{d}{dt} \left(\frac{y}{1-a(x)y} \right) \\
 &=& \dsp \frac{dy}{dt} .\frac{1}{1-a(x)y} + \left(a'(x)y \frac{dx}{dt} +a(x)\frac{dy}{dt}\right).\frac{y}{(1-a(x)y)^2} \\
  &=& \dsp \frac{\beta y}{1-a(x)y} + (\alpha xa'(x)+\beta a(x))\frac{y^2}{(1-a(x)y)^2} \\
  &=& \beta z +b(x)z^2.
\end{array}
\]

Depending on the choice of $\alpha$ and $\beta$, the associated derivation could, in the best case, be invertible on the complete enveloping algebra and, in the worst case (for example $\beta=0$), be non-invertible on the Lie algebra. 
% \section{A short overview.}

% \subsection{A Lie-theoretic approach. }

% We can now rephrase our linearization problem in a Lie-theoretic context. We
% refer to {\cite{}} for classical results on (complete) graded Lie algebras.
% \begin{itemizearrow}
%   \item Position of the problem
  
%   \item The flow
  
%   \item Conjugacy and the Dynkin equation
  
%   \item On the use of Combinatorial Hopf algebras.
% \end{itemizearrow}

% \subsection{Cocommutative Hopf algebras.}

% \begin{itemizearrow}
%   \item differential equations
  
%   \item exp and Dynkin
  
%   \item Renormalization in physics and toy models.
% \end{itemizearrow}

%\subsection{Non commutative symmetric functions and the continuous BCH
%  formula.}

\section{The ``overall''  invertible case and Rota-Baxter algebras.}\label{secovin}

%\subsection{Perturbative solution and Rota-Baxter algebras.}

If $d$ is invertible as well on $L$ as on $\mathcal{U}_{\geq 1}$, we
will say that $d$ is ``overall'' invertible and it is natural to
transform the equation $d\varphi=\varphi x$ into the so-called Atkinson's recursion:
\begin{equation}
\varphi=1+I(\varphi x)
\end{equation}
where $I$ is the inverse of $d$ on  $\mathcal{U}_{\geq 1}$. In this
case,
\[\forall x,y \in \mathcal{U}_{\geq 1},\quad I(x)I(y)=I(I(x)y+xI(y)).
\]
Namely, $I$ is a Rota-Baxter operator of weight $0$:
\begin{de}
Let $A$ an associative algebra and $R$ a linear operator on $A$. $R$
is a Rota-Baxter operator of weight $\theta$ if:\[
\forall a,b \in A,\quad R(a)R(b)=R(R(a)b+bR(a))+\theta R(ab)
\]
\end{de}

In this case (see \cite{KMP}), the equation (with perturbative parameter
$\al$):
\begin{equation}
\varphi_{\al}=1+\al I(\varphi_{\al} x)
\end{equation}
admits a perturbative solution:
\begin{equation}
\varphi_{\al} = 1 + \sum_{n \geq 1} \al^n R^{[n]} (x)
\end{equation}
with $R^{[1]} (x) = I (x)$ and $R^{[n + 1]} (x) = I(R^{[n]} (x)
.x)$. This solution is in $G$ and its
logarithm, as well as its image by the Dynkin map $\log_Y$, are given by 
combinatorial formulas (see \cite{KMP}). We proved in \cite{PM}:
\begin{thm}\label{thstrigen}
  Let $\delta$ a graded derivation that commutes with $d$ (thus with
  $I$), the above perturbative solution satisfies:
\begin{equation}\label{strigen}
\log_{\delta} (\varphi_{\al}) = \varphi^{- 1}_{\al} . (\delta (\varphi_{\al})) = \sum_{n
     \geq 1} \al^n \mathcal{L}_{\delta}^{[n]} (x)
\end{equation}
  with $\mathcal{L}^{[1]}_{\delta} (x) = I (\delta (x))$ et $\mathcal{L}^{[n + 1]}_{\delta} (x) =
  I ([\mathcal{L}_{\delta}^{[n]} (x), x])$.
\end{thm}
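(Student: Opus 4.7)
The plan is to introduce $\psi_\alpha := \log_\delta(\varphi_\alpha) = \varphi_\alpha^{-1} \delta(\varphi_\alpha)$ and derive for it a self-contained, Atkinson-type fixed point equation, which I can then unwind in powers of $\alpha$ to recover the stated recursion.

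First, I would rewrite the perturbed Atkinson equation $\varphi_\alpha = 1 + \alpha I(\varphi_\alpha x)$ in its equivalent differential form $d(\varphi_\alpha) = \alpha\, \varphi_\alpha x$ (apply $d$ and use $dI = \id$ on $\mathcal{U}_{\geq 1}$ together with $d(1)=0$). From this, the Leibniz rule for $d$ yields $d(\varphi_\alpha^{-1}) = -\varphi_\alpha^{-1} d(\varphi_\alpha) \varphi_\alpha^{-1} = -\alpha\, x\, \varphi_\alpha^{-1}$, while the hypothesis $\delta d = d\delta$ together with a second Leibniz expansion gives $d(\delta(\varphi_\alpha)) = \delta(d(\varphi_\alpha)) = \alpha\bigl(\delta(\varphi_\alpha)\, x + \varphi_\alpha\, \delta(x)\bigr)$. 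Plugging both into the Leibniz expansion
\[
d(\psi_\alpha) = d(\varphi_\alpha^{-1})\,\delta(\varphi_\alpha) + \varphi_\alpha^{-1}\, d(\delta(\varphi_\alpha))
\]
and cancelling the $\pm\alpha\, x\,\psi_\alpha$ terms produces the clean identity
\[
d(\psi_\alpha) = \alpha\,[\psi_\alpha, x] + \alpha\, \delta(x).
\]

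Next, since $\varphi_\alpha \in 1 + \mathcal{U}_{\geq 1}$, both $\varphi_\alpha^{-1} - 1$ and $\delta(\varphi_\alpha)$ lie in $\mathcal{U}_{\geq 1}$, so $\psi_\alpha \in \mathcal{U}_{\geq 1}$. I can therefore apply the overall inverse $I$ to both sides of the displayed identity and, using $Id = \id$ there, obtain the Atkinson-type fixed point equation
\[
\psi_\alpha = \alpha\, I\bigl([\psi_\alpha, x]\bigr) + \alpha\, I(\delta(x)).
\]

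To conclude, I would expand $\psi_\alpha = \sum_{n \geq 1} \alpha^n \psi_n$ and match coefficients: the $\alpha^1$ coefficient gives $\psi_1 = I(\delta(x)) = \mathcal{L}_\delta^{[1]}(x)$, and for each $n \geq 1$ the $\alpha^{n+1}$ coefficient gives $\psi_{n+1} = I([\psi_n, x]) = \mathcal{L}_\delta^{[n+1]}(x)$, which is exactly the stated recursion. The only genuine calculation is the Leibniz derivation of $d(\psi_\alpha) = \alpha[\psi_\alpha,x] + \alpha\delta(x)$; the commutation $\delta I = I\delta$ on $\mathcal{U}_{\geq 1}$, parenthetically noted in the hypothesis, is a free consequence of $\delta d = d\delta$ obtained by composing with $I$ on both sides, and the remaining steps are formal bookkeeping.
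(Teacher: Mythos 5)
Your argument is correct. Note that the paper does not actually prove Theorem \ref{thstrigen} in the text; it defers to the reference \cite{PM} and only remarks afterwards that the proof ``is based on the fact that $d$ is overall invertible.'' Your derivation is a clean, self-contained version of exactly that: you use overall invertibility twice, once as $d\circ I=\id$ on $\mathcal{U}_{\geq 1}$ to convert the Atkinson recursion $\varphi_\alpha=1+\alpha I(\varphi_\alpha x)$ into $d(\varphi_\alpha)=\alpha\,\varphi_\alpha x$, and once as $I\circ d=\id$ on $\mathcal{U}_{\geq 1}$ to pass from $d(\psi_\alpha)=\alpha[\psi_\alpha,x]+\alpha\,\delta(x)$ to the fixed-point equation for $\psi_\alpha$ --- and it is precisely this second step that fails when $d$ is only invertible on $L$, which is the point the paper is making in the paragraph following the theorem. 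All the intermediate checks hold: $\delta(\varphi_\alpha)\in\mathcal{U}_{\geq 1}$ because $\delta$ is graded with $\delta(1)=0$, hence $\psi_\alpha\in\mathcal{U}_{\geq 1}$ with no $\alpha^0$ term, and the coefficient extraction in $\alpha$ reproduces the recursion $\mathcal{L}^{[1]}_{\delta}(x)=I(\delta(x))$, $\mathcal{L}^{[n+1]}_{\delta}(x)=I([\mathcal{L}^{[n]}_{\delta}(x),x])$ exactly. Your parenthetical observation that $\delta I=I\delta$ follows from $\delta d=d\delta$ by sandwiching with $I$ is also correct, though, as you note, it is not actually needed in the computation.
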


The proof of this theorem is based on the fact that $d$ is overall invertible. This is necessary to
get the perturbative expansion of the solution $d\varphi=\varphi.x$
and then the formula \ref{strigen}. But this formula only involves the inverse of $d$ on $L$ and shall still hold if $d$ is only invertible on $L$. This idea could certainly lead to a new proof of theorem \ref{rightinv}, when $d$ is invertible on $L$ but not
necessary on $\mathcal{U}_{\geq 1}$. 

For example, in the case $L=L^k$, since the vector spaces (over $k$) $L_n$ and thus
$\mathcal{U}_n$ are finite dimensional, for a given parameter
$\eps$, the regularized operator $d+\eps Y$ is a derivation on the Lie algebra
$L^{k[[\varepsilon]][\eps^{-1}]}$ where $k[[\varepsilon]][\eps^{-1}]$ is the algebra of Laurent series in $\eps$ with coefficients in $k$.

On one hand, on each $\mathcal{U}^{k[[\varepsilon]][\eps^{-1}]}_n$ the restriction of
$d+\eps Y$ is invertible: $\mathcal{U}_n^{k[[\varepsilon]][\eps^{-1}]}$ is a finite--dimensional  $k[[\varepsilon]][\eps^{-1}]$--module and the determinant of the restriction of $d+\eps Y$ to this module is a non zero Laurent series. The derivation $d+\eps Y$ is thus invertible, of inverse $I_{\eps}$, on $\mathcal{U}_{\geq 1}^{k[[\varepsilon]][\eps^{-1}]}$. 

On the other hand, on each $L_n\subset L^{k[[\varepsilon]][\eps^{-1}]}_n$, since $d$
is invertible, $I_{\eps}$ still makes sense when $\eps=0$ and
coincides, for this value, with the inverse of $d$ (say
$I_0$).

The perturbative solution of $d\varphi=\varphi.x$ makes no sense since
$d$ is not necessarily overall invertible but, after the
regularization, $d+\eps Y$ is overall invertible and the perturbative
expansion of the solution of
$(d+\eps Y)\varphi_{\eps}=\varphi_{\eps}.x$ makes sense, especially
when $x$ is in $L^k$.  The formula \ref{strigen}, with $\delta$
commuting with $d+\eps Y$ still hold. If we take $\delta=Y$ then the
right part of this formula make sense when $\eps=0$ thus
$\log_Y\varphi_{\eps}$ and then  $\varphi_{\eps}$ can be evaluated
at $\eps=0$ and gives the solution in the invertible case! There is no contradiction here, this simply means that the expansion
giving the
perturbative solution may have poles, but once rewritten, for example in
a basis of $\mathcal{U}$, poles in $\eps$ cancel out. 

This "$\eps$--regularization" turn a derivation in an overall invertible regularized derivation. Such regularizations appear in quantum field theory and will provide a way to deal with non--invertible derivations, see section \ref{norenor}.

\section{The non-invertible case.}\label{secnoninv}

\subsection{$d$-conjugacy.}
Inspired by the situation in dynamical systems, whether $d$ is
invertible or not, it is relevant to study $d$-conjugate elements in $L$
\begin{prop}
Two elements in $\hL$ are $d$-conjugate if there exists $\varphi\in
\hGL$ such that\[
d(\varphi)+v.\varphi=\varphi.u
\]
we say that $\varphi$ d-conjugates $u$ to $v$. This is an
equivalence relation and we note $u\sim_d v$.
\end{prop}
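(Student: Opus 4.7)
The plan is to check the three axioms of an equivalence relation in turn, treating the defining equation $d(\varphi) + v \cdot \varphi = \varphi \cdot u$ as a linear equation in $\varphi \in G \subset \mathcal{U}_{\geq 1}$ and exploiting the Leibniz rule for $d$.

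For reflexivity, I would take $\varphi = 1 \in G$; since $d(1) = 0$ and $u \cdot 1 = 1 \cdot u$, the element $1$ $d$-conjugates $u$ to itself.

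For symmetry, suppose $\varphi$ $d$-conjugates $u$ to $v$. The natural candidate to $d$-conjugate $v$ to $u$ is $\varphi^{-1}$. I would first differentiate the identity $\varphi \cdot \varphi^{-1} = 1$ using the Leibniz rule to get the standard formula $d(\varphi^{-1}) = -\varphi^{-1} \cdot d(\varphi) \cdot \varphi^{-1}$. Substituting $d(\varphi) = \varphi u - v \varphi$ from the hypothesis gives $d(\varphi^{-1}) = -u \varphi^{-1} + \varphi^{-1} v$, which rearranges exactly to $d(\varphi^{-1}) + u \cdot \varphi^{-1} = \varphi^{-1} \cdot v$, as desired.

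For transitivity, if $\varphi_1$ $d$-conjugates $u$ to $v$ and $\varphi_2$ $d$-conjugates $v$ to $w$, I claim the product $\varphi_2 \varphi_1$ $d$-conjugates $u$ to $w$ (note that $\varphi_2 \varphi_1 \in G$ since $G$ is a group). Using the Leibniz rule for $d$ on $\mathcal{U}$ and substituting both hypotheses, the two cross-terms involving $v$ cancel:
\begin{align*}
d(\varphi_2 \varphi_1) &= d(\varphi_2)\varphi_1 + \varphi_2 d(\varphi_1) \\
&= (\varphi_2 v - w \varphi_2)\varphi_1 + \varphi_2 (\varphi_1 u - v \varphi_1) \\
&= \varphi_2 \varphi_1 \cdot u - w \cdot \varphi_2 \varphi_1.
\end{align*}

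None of the three steps presents a real obstacle; the only point worth checking carefully is that $\varphi^{-1}$ and $\varphi_2 \varphi_1$ remain in $G$, which follows from the fact (recalled earlier in the excerpt) that $G$ is the group of group-like elements of $\mathcal{U}$, together with the well-definedness of the inverse series in $\mathcal{U}_{\geq 1}$.
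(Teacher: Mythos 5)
Your proof is correct and follows essentially the same route as the paper: symmetry via $d(\varphi^{-1})=-\varphi^{-1}d(\varphi)\varphi^{-1}$ and transitivity via the Leibniz rule applied to the product $\varphi_2\varphi_1$, with the $v$-terms cancelling. The only (harmless) addition is that you spell out reflexivity with $\varphi=1$, which the paper leaves implicit.
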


If $\varphi$ $d$-conjugates $u$ to $v$, then $\varphi^{-1}$
conjugates $v$ to $u$ since:
\[
d(\varphi^{-1})=-\varphi^{-1}
d(\varphi)\varphi^{-1}=-\varphi^{-1}(\varphi u-v \varphi )
\varphi^{-1}=-u \varphi^{-1} +\varphi^{-1} v
\]
On the same way, if $\varphi$ $d$-conjugates $u$ to $v$ and $\psi$
$d$-conjugates $v$ to $w$, then $\psi\varphi$ $d$-conjugates $u$ to
$w$:
\begin{eqnarray*}
d(\psi \varphi) +w \psi \varphi  &=& d(\psi)\varphi +\psi d(\varphi) +w\psi \varphi  \\
 &=& (\psi v -w \psi)\varphi +\psi (\varphi u-v \varphi)+w\psi \varphi  \\
 &=& \psi \varphi u
\end{eqnarray*}

The idea of $d$-conjugacy comes from dynamical systems and is
motivated by the following fact: On one hand, if, for example $d$ is
invertible on $\hL$, then there is only one $d$-conjugacy class since
any $u$ in $\hL$ is conjugated to $0$ by $\varphi=\exp_d(u)$. On the other hand, whenever $d$
is not invertible, these equivalence classes are not trivial and their
classification or description is of great interest, especially in the framework of dynamical systems.

The complete classification, up to $d$-conjugacy, shall, in many
cases, be out of reach. For example, if $d=0$. the equation reads
\[
v \varphi=\varphi u.
\]
Using the graduation and $\varphi=\exp(\alpha)$ ($\alpha\in
\hL$), in the first graded components we get:
\[
v_1=u_1,\ v_2+ v_1 \alpha_1= u_2 + \alpha_1 u_1 
\]
thus  we must find $\alpha_1$ such that 
\[
[\alpha_1,v_1]=v_2 -u_2
\]
and the existence of a solution highly depends on the structure and the
relations in $L$. This last equation suggest, in order to prove that two elements $u$ and $v$ are $d$--conjugate, to look for a solution $\varphi=\exp(\alpha)$ of the equation:  
\begin{equation}\label{conjlog}
\log_d (\varphi)+\varphi^{-1} v \varphi=u.
\end{equation}
If $\alpha=\sum_{n \geq 1} \alpha_n$ and $\beta=\log_d(\varphi)=\sum_{n \geq 1} \beta_n$, let us recall that we already have (see proposition \ref{dexp} and equation \ref{adjgrad}):
\begin{equation}\label{loggrad}
\beta_n=d(\alpha_n)+\sum_{k=1}^{n-1}\sum_{i=1}^{n-k}\frac{(-1)^i}{(i+1)!}\sum_{n_1+..+n_i=n-k}ad_{\alpha_{n_1}}(
ad_{\alpha_{n_2}}...(ad_{\alpha_{n_i}}(d(\alpha_{k})))...) = d(\alpha_n)+P_{n-1}(\alpha,d(\alpha))
\end{equation}
where $P_{n-1}$ is a Lie polynomial in $\alpha_k$ and $d(\alpha_k)$ with $k<n$. On the same way, 
\begin{lem}\label{uexp} If $v$ and $\alpha$ are in $L$, then
\begin{equation} \label{adju}
\exp(-\alpha). v . \exp(\alpha)=\left ( \sum_{i\geq 0} \frac{(-1)^i}{i!}ad_\alpha^i \right )(v).
\end{equation}
\end{lem}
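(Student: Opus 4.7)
The plan is to prove the identity by the standard ODE trick, adapted to the complete graded setting. I introduce an auxiliary parameter $t$ and work in the Lie algebra $L^{\mathcal{A}[t]}$ and its completed enveloping algebra $\mathcal{U}^{\mathcal{A}[[t]]}$, where the exponentials $\exp(\pm t\alpha) \in 1+\mathcal{U}^{\mathcal{A}[[t]]}_{\geqslant 1}$ are well defined. Define
\[
f(t) = \exp(-t\alpha)\cdot v\cdot \exp(t\alpha)\in\mathcal{U}^{\mathcal{A}[[t]]}, \qquad g(t)=\sum_{i\geqslant 0}\frac{(-t)^i}{i!}\operatorname{ad}_\alpha^i(v)\in L^{\mathcal{A}[[t]]}.
\]
The goal is to show $f(t)=g(t)$ and then evaluate at $t=1$.

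First, I would differentiate $f$ with respect to $t$. The series defining $\exp(\pm t\alpha)$ commutes with $\alpha$, so $\partial_t\exp(\pm t\alpha)=\pm\alpha\exp(\pm t\alpha)=\pm\exp(\pm t\alpha)\alpha$. Applying the Leibniz rule gives
\[
\partial_t f(t) = -\alpha\cdot f(t) + f(t)\cdot\alpha = [f(t),\alpha] = -\operatorname{ad}_\alpha\bigl(f(t)\bigr),
\]
and clearly $f(0)=v$. On the other hand, $g(t)$ manifestly satisfies $\partial_t g(t)=-\operatorname{ad}_\alpha(g(t))$ with $g(0)=v$ by term-by-term differentiation.

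Next, I would invoke uniqueness of the solution of this linear ODE in $\mathcal{U}^{\mathcal{A}[[t]]}$. Writing $h(t)=f(t)-g(t)\in\mathcal{U}^{\mathcal{A}[[t]]}_{\geqslant 1}$, we have $\partial_t h=-\operatorname{ad}_\alpha(h)$ and $h(0)=0$. Expanding $h(t)=\sum_{m\geqslant 1}t^m h_m$ and matching coefficients yields $(m+1)h_{m+1}=-\operatorname{ad}_\alpha(h_m)$, so $h_m=0$ for all $m$ by induction starting from $h_0=0$. Hence $f=g$, and setting $t=1$ delivers the claimed formula. Finally, I verify that $g(1)=\sum_{i\geqslant 0}\frac{(-1)^i}{i!}\operatorname{ad}_\alpha^i(v)$ converges in the completion $L$: since $\alpha\in L=\prod_{n\geqslant 1}L_n$ has no degree-zero component, $\operatorname{ad}_\alpha^i(v)$ lands in $L_{\geqslant i+1}$, so only finitely many terms contribute to each graded component.

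The main technical point is not the computation itself but the bookkeeping in the completed setting — ensuring that $f(t)$, $g(t)$ and their $t$-derivatives live in the appropriate completion, and that the evaluation at $t=1$ is legitimate. Both follow cleanly from the fact that $L$ is concentrated in degrees $\geqslant 1$, which makes each graded component of the ambient sums a finite sum.
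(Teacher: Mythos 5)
Your argument is correct and complete. The paper itself gives no real proof here --- it only remarks that ``the straightforward proof follows the same lines as for Lemma~1'' (the Magnus-type formula, credited to the literature) --- so your ODE-in-$t$ argument is a legitimate and fully standard way of filling in that gap: the derivative computation, the coefficient-matching uniqueness argument (which uses characteristic $0$ to divide by $m+1$), and the convergence check via $\operatorname{ad}_\alpha^i(v)\in L_{\geqslant i+1}$ are all sound, and the specialization $t=1$ is licit because each graded component of $f(t)$ and $g(t)$ is a polynomial in $t$. For what it is worth, there is an even shorter route that avoids the auxiliary parameter altogether: writing $L_\alpha$ and $R_\alpha$ for left and right multiplication by $\alpha$ on $\mathcal{U}$, these commute, so
\[
e^{-\operatorname{ad}_\alpha}=e^{-(L_\alpha-R_\alpha)}=e^{-L_\alpha}\,e^{R_\alpha},
\]
which applied to $v$ gives $\exp(-\alpha)\,v\,\exp(\alpha)$ directly; the same graded-convergence remark you make justifies all three exponentials in the completion. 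Your proof and this one buy the same thing, and both are consistent with the graded bookkeeping the paper uses elsewhere (e.g.\ in equation (\ref{conjgrad})).
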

 The straightforward proof follows the same lines as for lemma \ref{adj} and if $\alpha=\sum_{n \geq 1} \alpha_n$, $v=\sum_{n \geq 1} v_n$ and $w=\exp(-\alpha). v . \exp(\alpha)=\sum_{n \geq 1} w_n$:
\begin{equation}\label{conjgrad}
w_n=v_n+\sum_{k=1}^{n-1}\sum_{i=1}^{n-k}\frac{(-1)^i}{i!}\sum_{n_1+..+n_i=n-k}ad_{\alpha_{n_1}}(
ad_{\alpha_{n_2}}...(ad_{\alpha_{n_i}}(v_{k}))...) = v_n+Q_{n-1}(\alpha,v)
\end{equation} 
where $Q_{n-1}$ is a Lie polynomial in $\alpha_k$ and $v_k$ with $k<n$. 
%\begin{eqnarray*}
%(\exp v).u &=& \left (\sum_{n\geq 0} \frac{1}{n!} v^n \right).u \\
% &=& \sum_{n\geq 0}  \sum_{i=0}^{n} \frac{1}{n!} \binom{n}{i} ad_v^i(u) . v^{n-i}
% \\
%&=&  \sum_{i\geq 0} \sum_{n\geq i}
%\frac{1}{i!(n-i)!}ad_v^i(u) . v^{n-i} \\
%&=& \left ( \sum_{i\geq 0} \frac{1}{i!}ad_v^i \right )(u) .\exp(v)
%\end{eqnarray*}

Thanks to equations \ref{loggrad} and \ref{conjgrad}, one can already give a particular element in each conjugacy class:

\begin{thm}\label{Fnor} Let $d$ a graded derivation on $L$ and $F=\prod_{n\geq 1}F_n$ a supplementary vector space of $d(L)$ in $L$. For any $u\in L$, there exists an element $u_F\in F$ which is a $d$-conjugate of $u$.
\end{thm}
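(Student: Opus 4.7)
We seek $\varphi=\exp(\alpha)\in G$ with $\alpha\in L$ and $u_F\in F$ such that $\varphi$ $d$--conjugates $u$ to $u_F$, \ie
\[
\log_d(\varphi)+\varphi^{-1}u_F\varphi = u,
\]
which is the equation (\ref{conjlog}) with $v=u_F$. Writing $\alpha=\sum_{n\geqslant 1}\alpha_n$, $u_F=\sum_{n\geqslant 1}(u_F)_n$ and using the degree-by-degree expansions (\ref{loggrad}) and (\ref{conjgrad}), the equation is equivalent to the system
\[
d(\alpha_n)+(u_F)_n = u_n - P_{n-1}\bigl(\alpha_1,\dots,\alpha_{n-1},d(\alpha_1),\dots,d(\alpha_{n-1})\bigr) - Q_{n-1}\bigl(\alpha_1,\dots,\alpha_{n-1},(u_F)_1,\dots,(u_F)_{n-1}\bigr),
\]
for every $n\geqslant 1$. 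The key point is that the right-hand side depends \emph{only} on data of strictly lower degree, so the nonlinear conjugacy equation has been replaced by a triangular sequence of linear equations in $L_n$.

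The plan is then to solve this system inductively, degree by degree. I first note that, because $d$ is a graded derivation, $d(L)\cap L_n = d(L_n)$; consequently the assumption $L=d(L)\oplus F$, combined with the fact that $F=\prod_{n\geqslant 1}F_n$ is graded, yields the graded splitting
\[
L_n = d(L_n)\oplus F_n \qquad(n\geqslant 1).
\]
At step $n$, the known element $r_n\in L_n$ appearing on the right-hand side is uniquely decomposed as $r_n = s_n+(u_F)_n$ with $s_n\in d(L_n)$ and $(u_F)_n\in F_n$; one then chooses any $\alpha_n\in L_n$ with $d(\alpha_n)=s_n$ (this choice being free modulo $\ker_{L_n}d$).

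Assembling $\alpha=\sum_{n\geqslant 1}\alpha_n\in L$ and $u_F=\sum_{n\geqslant 1}(u_F)_n\in F$, which are well defined in the completion, the construction guarantees that the graded equation holds in every degree. Hence $\varphi=\exp(\alpha)\in G$ $d$--conjugates $u$ to $u_F\in F$, proving the theorem. The only subtle point, which is the main structural ingredient of the proof, is precisely the fact that the supplementary decomposition $L=d(L)\oplus F$ refines to the graded decomposition $L_n=d(L_n)\oplus F_n$; everything else is the mechanical inductive bookkeeping of the polynomials $P_{n-1}$ and $Q_{n-1}$ inherited from (\ref{loggrad}) and (\ref{conjgrad}).
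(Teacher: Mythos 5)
Your proposal is correct and follows essentially the same route as the paper: the same reduction via equations (\ref{loggrad}) and (\ref{conjgrad}) to the triangular system $d(\alpha_n)+(u_F)_n+P_{n-1}+Q_{n-1}=u_n$, solved degree by degree using the graded splitting $L_n=d(L_n)\oplus F_n$ (the paper phrases this as the projection $p_F$ onto $F$ parallel to $d(L)$ being a graded operator). Your explicit justification that $d(L)\cap L_n=d(L_n)$ because $d$ is graded is a small but welcome clarification of a point the paper leaves implicit.
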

\begin{proof} If $u=\sum_{n\geq 1} u_n$ we must find $u_F=v=\sum_{n\geq 1} v_n\in F$ and $\varphi=\exp(\alpha)\in G$ ($\alpha=\sum_{n\geq 1} \alpha_n$) such that
\[
\log_d(\exp(\alpha))+\exp(-\alpha)v \exp(\alpha)=u
\]
that is to say, for all $n>0$:
\begin{equation}\label{Fnorgrad}
d(\alpha_n)+P_{n-1}(\alpha,d(\alpha)) +v_n+Q_{n-1}(\alpha,v)=u_n.
\end{equation}
For $n=1$, this read $d(\alpha_1)=u_1 -v_1$. Let $p_F$ the projection on $F$, parallel to $d(L)$. This is a graded linear operator and if $v_1=p_F(u_1)\in F_1$ then $u_1-v_1$ is in $d(L_1)$ and one can find a solution to the equation $d(\alpha_1)=u_1 -v_1$. Suppose for a given $n>0$, that we have found $\alpha_1,\dots,\alpha_n$ in $L$ and $v_1,\dots, v_n$ in $F$ that solve equation \ref{Fnorgrad}. The polynomials $P_{n}(\alpha,d(\alpha))$ and $Q_{n}(\alpha,v)$ are well defined and, at order $n+1$, we must find $\alpha_{n+1}\in L_{n+1}$ and $v_{n+1}\in F_{n+1}$ such that
\[
d(\alpha_{n+1})=u_{n+1}-P_{n}(\alpha,d(\alpha))-Q_{n}(\alpha,v)-v_{n+1}
\]
As in the case $n=1$, we take $v_{n+1}=p_F(u_{n+1}-P_{n}(\alpha,d(\alpha))-Q_{n}(\alpha,v))$ and we can then find an element $\alpha_{n+1}$ that solves the equation. This ends the recursive construction of $\alpha$ and $u_F=v$.
\end{proof}   

Note that the element $u_F$ is not unique since there is a latitude in the determination of $\alpha$. There would be more to say if one could chose $F$ as a sub--Lie algebra, especially $\ker_L d$. This is the case if we assume that
\begin{equation}\label{dnormal}
L=\ker_L d \oplus d(\hL).
\end{equation} 
This hypothesis is natural dynamical systems and, in the sequel, we will deal with derivations satisfying equation (\ref{dnormal}). Under this assumption, the restriction of $d$ from $d(L)$ to itself is invertible and we note in the sequel $I$ this inverse, defined from $d(L)$ to itself.

From now on, we can choose $F=\ker_L d$ in the previous theorem and we note $p$ the projector on $\ker_L d$, parallel to $d(L)$. This choice allow to define and study "normal forms". 

Following the example of $L=y^2 \CC[[x]] \partial_y$ if $d=ad_{x\partial_x}$ then we have
\[
d(L)=xy^2\CC[[x]] \partial_y,\quad \ker_L d=\text{Vect}\{y^2 \partial_y\}\] so that $L=\ker_L d \oplus d(\hL)$. In this case, with $F=\ker_L d=L_1$, one can easily see, using equation \ref{Fnorgrad} and the very simple Lie bracket in $y^2 \CC[[x]] \partial_y$, that  $a(x)y^2\partial_y$ is $d$--conjugate to $a(0)y^2\partial_y$. As we shall see now, the choice $F=\ker_L d$ provides $d$--normal forms that play a crude role in dynamical systems.

\subsection{Normalization.} \label{secnor}

Let us look closer to the classification of $\hL$, up to
$d$-conjugacy, when $L=\ker_L d \oplus d(\hL)$.

\begin{thm}\label{thnor} For any $u\in\hL$, there exists $v\in \ker_L d$
  such that
\[ u\sim_d v .
\]
Moreover, $v,w \in \ker_L d$ are $d$-conjugated to $u$ if and only if
there exists $\varphi\in \exp(\ker_L d)$ such that
\[
v\varphi =\varphi w
\]
\end{thm}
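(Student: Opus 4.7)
The plan is to deduce the first assertion as an immediate specialization of Theorem \ref{Fnor} and to establish the second assertion by a graded induction, exploiting the hypothesis $L=\ker_L d \oplus d(L)$ to force $d(\alpha_n)=0$ at every order.

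For the existence of a normal form, I apply Theorem \ref{Fnor} with the choice $F=\ker_L d=\prod_{n\geq 1}(\ker_L d \cap L_n)$, which is a graded supplement of $d(L)$ by hypothesis. The theorem directly produces $v=u_F\in\ker_L d$ with $u\sim_d v$.

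The $\Leftarrow$ direction of the second assertion is short. If $\varphi=\exp(\beta)$ with $\beta\in\ker_L d$, then extending $d$ as a derivation to $\mathcal{U}$ and using Leibniz gives $d(\beta^n)=0$, hence $d(\varphi)=0$. Thus $v\varphi=\varphi w$ rewrites as $d(\varphi)+v\varphi=\varphi w$, so $\varphi$ $d$-conjugates $w$ to $v$; combined with transitivity, if either of $v,w$ is $d$-conjugate to $u$, so is the other.

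The substantive direction is $\Rightarrow$. Assuming $v,w\in\ker_L d$ are both $d$-conjugate to $u$, transitivity yields some $\varphi=\exp(\alpha)\in G$ with $d(\varphi)+v\varphi=\varphi w$, equivalently equation \ref{conjlog}: $\log_d(\varphi)+\varphi^{-1}v\varphi=w$. Combining the graded expansions \ref{loggrad} and \ref{conjgrad} gives, at order $n$,
\[
d(\alpha_n)+P_{n-1}(\alpha,d(\alpha))+v_n+Q_{n-1}(\alpha,v)=w_n.
\]
I prove $\alpha_n\in\ker_L d$ by induction on $n$. For $n=1$, $P_0=Q_0=0$, so $d(\alpha_1)=w_1-v_1\in\ker_L d\cap d(L)=\{0\}$. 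For the inductive step, assume $\alpha_1,\ldots,\alpha_{n-1}\in\ker_L d$. Then every monomial of $P_{n-1}(\alpha,d(\alpha))$ contains a factor $d(\alpha_k)$ with $k<n$, so $P_{n-1}(\alpha,d(\alpha))=0$; and $Q_{n-1}(\alpha,v)$ is a Lie polynomial in $\alpha_k,v_k$ (all in $\ker_L d$), which lies in $\ker_L d$ because $\ker_L d$ is a sub-Lie algebra. Hence
\[
d(\alpha_n)=w_n-v_n-Q_{n-1}(\alpha,v)\in\ker_L d\cap d(L)=\{0\},
\]
so $\alpha_n\in\ker_L d$. Consequently $\alpha\in\ker_L d$, i.e.\ $\varphi\in\exp(\ker_L d)$, whence $d(\varphi)=0$ and the original conjugacy relation collapses to $v\varphi=\varphi w$.

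The main obstacle is the inductive step of the last paragraph: one must simultaneously observe that $P_{n-1}$ vanishes (because every term carries a $d(\alpha_k)$ with $k<n$) and that $Q_{n-1}$ stays in $\ker_L d$ (because $\ker_L d$ is a Lie subalgebra). Once these two stability facts are in hand, the decomposition $L=\ker_L d\oplus d(L)$ closes the induction automatically.
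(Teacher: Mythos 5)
Your proof is correct and follows essentially the same route as the paper: existence via Theorem \ref{Fnor} with $F=\ker_L d$, and the converse by the graded induction on equation \ref{conjlog} using $\ker_L d\cap d(L)=\{0\}$ together with the facts that $P_{n-1}$ carries a factor $d(\alpha_k)$ in each term and $Q_{n-1}$ stays in the sub-Lie algebra $\ker_L d$. Your write-up even makes explicit the vanishing of $P_{n-1}$, which the paper leaves implicit.
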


Such elements are called $d$-normal forms of $u$.

\begin{proof} The existence of such normal forms follows from theorem \ref{Fnor} with $F=\ker_L d$. For the second
  part of this theorem, let us assume that $v$ is a $d$--normal form of $u$
  and let $\varphi \in \exp(\ker_L d)$. If   $v=\varphi^{-1} w \varphi $, that is to say $\varphi v=w \varphi $, then, thanks to lemma \ref{uexp}, $w$ is in the sub--Lie algebra $\ker_L d$ and, since $d(\varphi)=0$,
  \[ d(\varphi)+w \varphi=w \varphi= \varphi v\]
  thus $w$ is $d$-conjugated to $v$ and to $u$: $w$ is a $d$--normal form.

Conversely, if $v$ and $w$ are two normal forms, then they are 
$d$-conjugated: there exists $\varphi \in G$ such that
\[ 
\log_d(\varphi)+\varphi^{-1} w \varphi=v
\] and it remains to prove that $\varphi \in \exp(\ker_L d)$.

As in the proof of theorem \ref{Fnor}, if $\varphi=\exp
\al$, we have for $n\geq 1$: 
\[
d(\alpha_n)+P_{n-1}(\alpha,d(\alpha)) +w_n+Q_{n-1}(\alpha,w)=v_n.
\]

In graduation 1 this gives $d(\al_1)+w_1=v_1$ or $d(\al_1)=v_1-w_1 \in
\ker_L d \cap d(\hL)={0}$. Thus $w_1=v_1$ and $\al_1\in \ker_L d$. In
graduation 2 the equation
\[
d(\al_2)-\frac{1}{2}[\al_1, d(\al_1)] +w_2 -[\al_1, w_1]=v_2
\] reduces to
\[d(\al_2)=v_2-w_2 -[\al_1, w_1] \in \ker_L d \cap d(\hL)\] since $[\ker_L d,\ker_L d]\in \ker_L d$. Once again $d(\alpha_2)=0$ 
and $\al_2\in \ker_L d$. We get recursively that $\varphi \in \exp(\ker_L
d)$, using the fact that the polynomials $Q_{n-1}(\alpha,w)$ are Lie polynomials of elements of the sub-Lie algebra $\ker_L d$. 
\end{proof}

This theorem gives a description of the $d$-conjugacy classes : the
$d$-conjugacy classes are exactly the  classes of $\ker_L d$
for the classic conjugacy by the group $\exp(\ker_L d)$. As this will be used in section \ref{seccor}, this implies that if $u$ is $d$-conjugated to $0$, then $0$ is its unique normal form because 
$\varphi 0 \varphi^{-1}=0$ !

The second part of this theorem implies that there can be more than one normal form. Let us mention that, in the framework of dynamical systems, J. Ecalle
and B. Vallet (see \cite{EV1}, \cite{EV2}), provide a way to define for each $u\in L$ a unique normal form, with the help of a supplementary condition:

\begin{prop} Let $\del$ be an invertible derivation such that $\del(\ker_L
  d)=\ker_L d$, there exist a unique solution to the system:
\[
\left \{
\begin{array}{rcl}
d(\varphi)+  v \varphi &=&\varphi u \\
d(v) &=& 0 \\
p((\del(\varphi))\varphi^{-1}) &=& 0
\end{array} \right.
\]
\end{prop}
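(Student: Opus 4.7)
The plan is to parameterize all pairs $(\varphi,v)$ satisfying the first two equations (the content of theorem \ref{thnor}) and then show that the third equation singles out exactly one of them. Invoke theorem \ref{thnor} to produce some $(\varphi_0,v_0)$ with $d(\varphi_0)+v_0\varphi_0=\varphi_0 u$ and $v_0\in\ker_L d$. If $(\varphi,v)$ is any other such pair, then $\psi:=\varphi\varphi_0^{-1}$ $d$--conjugates $v_0$ to $v$; since both are in $\ker_L d$, the proof of theorem \ref{thnor} forces $\psi\in\exp(\ker_L d)$, and then $v=\psi v_0\psi^{-1}$. Conversely, any $\psi\in\exp(\ker_L d)$ produces such a solution, because the Magnus formula (lemma \ref{dexp}) yields $d(\psi)=0$ when $\log\psi\in\ker_L d$. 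So the question reduces to: for which $\psi\in\exp(\ker_L d)$ does $\varphi=\psi\varphi_0$ verify $p(\delta(\varphi)\varphi^{-1})=0$?

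Using the Leibniz rule,
\[
\delta(\psi\varphi_0)(\psi\varphi_0)^{-1} = \delta(\psi)\psi^{-1} + \psi\bigl(\delta(\varphi_0)\varphi_0^{-1}\bigr)\psi^{-1}.
\]
The key structural point is that, for $\psi=\exp\beta$ with $\beta\in\ker_L d$, the inner automorphism $\Theta_\psi:=e^{\tmop{ad}_\beta}$ commutes with $d$: since $d(\beta)=0$ gives $[\beta,d(\cdot)]=d([\beta,\cdot])$, one has $d\circ\tmop{ad}_\beta^k=\tmop{ad}_\beta^k\circ d$. Therefore $\Theta_\psi$ preserves both $\ker_L d$ and $d(L)$, and thus commutes with the projector $p$. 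Moreover $\delta(\psi)\psi^{-1}$ itself lies in $\ker_L d$: by lemma \ref{dexp} applied to $\delta$, $\psi^{-1}\delta(\psi)$ is a Lie series in $\beta$ and $\delta(\beta)$, both in $\ker_L d$ (the second by the hypothesis $\delta(\ker_L d)=\ker_L d$), so it lies in $\ker_L d$, and conjugating by $\psi\in\exp(\ker_L d)$ keeps it there. Setting $W:=p(\delta(\varphi_0)\varphi_0^{-1})\in\ker_L d$, the third equation then collapses to
\[
\delta(\psi)\psi^{-1}+\psi W\psi^{-1}=0, \quad\text{i.e.,}\quad \log_\delta(\psi)=-W.
\]

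Finally, $\delta$ restricts to an invertible graded derivation on the complete graded sub--Lie algebra $\ker_L d$: surjective by the hypothesis $\delta(\ker_L d)=\ker_L d$, injective because $\delta$ is invertible on $L$. Theorem \ref{rightinv} applied to $(\ker_L d,\delta|_{\ker_L d})$ then yields a unique $\psi=\exp_\delta(-W)\in\exp(\ker_L d)$ solving $\log_\delta(\psi)=-W$, hence a unique pair $(\varphi,v)=(\psi\varphi_0,\psi v_0\psi^{-1})$; existence and uniqueness come simultaneously. The only genuinely non-routine step I foresee is the commutation $p\circ\Theta_\psi=\Theta_\psi\circ p$, which rests on the observation that $\tmop{ad}_\beta$ commutes with $d$ whenever $\beta\in\ker_L d$; everything else is a direct invocation of the Magnus formula, of theorem \ref{rightinv} on $\ker_L d$, and of theorem \ref{thnor}.
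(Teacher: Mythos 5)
Your argument is correct, and it reaches the conclusion by a cleaner mechanism than the paper's. Both proofs start identically: parameterize all solutions of the first two equations as $(\psi\varphi_0,\psi v_0\psi^{-1})$ with $\psi\in\exp(\ker_L d)$ and expand $\del(\psi\varphi_0)(\psi\varphi_0)^{-1}=\del(\psi)\psi^{-1}+\psi\bigl(\del(\varphi_0)\varphi_0^{-1}\bigr)\psi^{-1}$. The paper then determines $\psi$ by an order-by-order recursion: writing $\psi^{-1}=\exp(\al)$, it solves $\del(\al_1)=p(\beta_1)$ in graduation $1$ and asserts that the higher graduations "follow recursively," which at each order requires untangling the projector $p$ from the Magnus expansion. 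You instead make two structural observations --- that $\tmop{ad}_\beta$ commutes with $d$ for $\beta\in\ker_L d$, so the inner automorphism $e^{\tmop{ad}_\beta}$ commutes with $p$, and that $\del(\psi)\psi^{-1}\in\ker_L d$ because $\log_\del(\psi)$ is a Lie series in $\beta$ and $\del(\beta)$ --- which eliminate $p$ entirely and collapse the third condition to the single equation $\log_\del(\psi)=-p\bigl(\del(\varphi_0)\varphi_0^{-1}\bigr)$ posed in the complete graded Lie algebra $\ker_L d$, where $\del$ restricts to an invertible graded derivation. Existence and uniqueness of $\psi$ then come from one application of Theorem \ref{rightinv} (and its uniqueness addendum) rather than a fresh recursion; as a bonus you get the closed form $\psi=\exp_{\del}(-W)$ and a transparent accounting of why each hypothesis on $\del$ is needed. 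The only point worth stating explicitly is that the parameterization of solutions is exhaustive, i.e.\ that any two solutions differ by $\psi\in\exp(\ker_L d)$; you correctly source this from the uniqueness half of Theorem \ref{thnor}, whose proof shows that any group element conjugating one normal form to another lies in $\exp(\ker_L d)$.
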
 

Let us give an idea of the proof. Once a solution $(v,\varphi)$ of 
\[
\left \{
\begin{array}{rcl}
d(\varphi) &=& \varphi u - v\varphi \\
d(v) &=& 0 \\
\end{array} \right.
\] is given the other solutions are given by $(\psi v
\psi^{-1}, \psi \varphi)$ with $\psi\in \exp(\ker_L d)$. In order to
prove this proposition we have to show that the condition 
\[ p((\del( \psi\varphi))( \psi\varphi)^{-1}) = 0 
\]
determines a unique $\psi\in \exp(\ker_L d)$. But,
\[
(\del(\psi\varphi ))( \psi\varphi)^{-1}=\psi (\del \varphi) \varphi^{-1}\psi^{-1}+
 \del(\psi)\psi^{-1}=\psi (\del \varphi) \varphi^{-1}\psi^{-1} -\psi\del(\psi^{-1}) 
\]
Once again, using the graduation, if
$\del(\varphi)\varphi^{-1}=\beta\in L$ and $\psi^{-1}=\exp(\al)\in \exp(\ker_L d)$, we
get for $n=1$,
\[p(-\del(\al_1)+\beta_1)=0
\] thus $\del(\al_1)=p(\beta_1)$ that has a unique solution in
$\ker_L d$ since $\del$ is invertible and $\ker_L d$ is stable by
$\del$. The remainder of the proof would follow recursively, as in the previous theorems. For any such
 $\del$ we can define a map $N_{\delta}$ from $\hL$ to $\ker
 d$ (note that the conjugating map is also unique).
 
 The reader can notice that in the case of $d=ad_{x \partial_x}$ on $y^2\CC[[x]] \partial_y$ there is exactly one normal form of $a(x)y^2 \partial_y$, namely $a(0)y^2\partial_y$. This is mainly due to the extreme simplicity of this toy model but it does not reflect the complexity of the general case, for which the choice of a normal form and its computation are not so easy to deal with.

\section{Normalization and renormalization.} \label{norenor}

\subsection{Main theorem.}\label{secmt}

The aim of this section is to point out that similar procedures as those used in perturbative quantum field theory can be used to compute normal forms in the case of non-invertible derivations. In the Connes-Kreimer picture of renormalization (see \cite{ck1} and \cite{ck2}), the computation of Feynman integrals shall deliver a character $\varphi$ on the the graded Hopf algebra of 1PI Feynman graphs, that is an element of the Lie group associated to the complete graded Lie algebra of infinitesimal characters on the Hopf algebra of Feynman graphs $L^k_{FG}$ (with $k=\CC$). This character is defined by Feynman integrals that could be divergent but can be renormalized, using dimensional regularization and Birkhoff decomposition. 

More precisely, after dimensional regularization, the Feynman integrals define a character $\varphi(\varepsilon)$ with values in the algebra of Laurent series $\mathcal{A}=k[[\eps]][\eps^{-1}]$, that is to say an element of the Lie group $G^{\mathcal A}$ of the Lie algebra $L^{\mathcal A}$. The presence of poles at $0$ reflects the divergence of the initial character. The usual renormalization of such a group-like element, is then based on the Birkhoff decomposition: since $\mathcal{A}=\mathcal{A}^-\oplus
\mathcal{A}^+$ where $\mathcal{A}^-=\eps^{-1}k[\eps^{-1}]$ and
$\mathcal{A}^+=k[[\eps]]$ are two sub-algebras, it is possible to factorize $\varphi(\varepsilon)$, that is
\begin{equation}
\varphi(\varepsilon)=\varphi^-(\varepsilon)\varphi^+(\varepsilon)
\end{equation} where $\varphi^-(\varepsilon)$ (resp. $\varphi^+(\varepsilon)$) is in $G^{{\mathcal A}^-}$ (resp. $G^{{\mathcal A}^+}$) and the evaluation of $\varphi^+(\varepsilon)$ at $\varepsilon=0$ gives the renormalized character $\varphi^\text{ren}$. We refer to \cite{EFGM} for further details on the Birkhoff decomposition. 

Such ideas can be applied in order to find normal forms. As in perturbative quantum field theory, the solutions of 
\begin{equation}
d\varphi=\varphi u
\end{equation}
are ill-defined whenever the derivation $d$ is not invertible. One can then chose to "regularize" $d$ as follows. Let $d$ a derivation and $\del$ a derivation on $L=L^{k}$ such
that $\ker_L d$ is stable by $\del$ and the restriction of $\delta$ from $\ker_L d$ to itself is invertible. These operators $d$ and $\delta$ can be extended to
$L^{\mathcal A}$ by $\mathcal A$--linearity and we get a "$\delta$-renormalization" scheme that delivers a normal form:
\begin{thm}\label{noren} Let $d$ and $\delta$ two derivations on $L=L^k$ such that:
\begin{enumerate}
\item $L=\ker_L d \oplus d(L)$,
\item $\ker_L d$ is stable by $\del$,
\item The restriction of $\delta$ from $\ker_L d$ to $\ker_L d$ is invertible.
\end{enumerate}
For $u\in L=L^k\subset L^{\mathcal{A}}$ ($\mathcal{A}=k[[\eps]][\eps^{-1}]$), the equation 
\begin{equation}
(d+\varepsilon \delta)\varphi=\varphi u
\end{equation} has a solution $\varphi=\varphi(\varepsilon)\in G^{\mathcal{A}}$ and, after Birkhoff decomposition, 
$$
\varphi=\varphi^- \varphi^+\quad (\varphi^-, \varphi^+)\in G^{\mathcal{A}^-}\times G^{\mathcal{A}^+},
$$
the group-like element $\varphi^{\text{ren}}=\varphi^+(\varepsilon)\mid_{\varepsilon=0}$ conjugates $u$ to a normal form $\beta \in \ker_{L} d$. Moreover, we have $\del \varphi^-_{\eps}= \varphi^-_{\eps}(\eps^{-1} \beta) $.
\end{thm}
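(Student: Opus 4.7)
The first step is to verify that $d+\varepsilon\delta$ is invertible on $L^{\mathcal{A}}$, so that Theorem \ref{rightinv} provides a solution $\varphi=\exp_{d+\varepsilon\delta}(u)\in G^{\mathcal{A}}$. The decomposition $L=\ker_L d\oplus d(L)$ restricts graduation by graduation to $L^k_n=\ker_{L^k_n}d\oplus d(L^k_n)$; by hypothesis (2), $\delta$ stabilizes $\ker_L d$, so in this decomposition $d+\varepsilon\delta$ is block upper-triangular on $L^{\mathcal{A}}_n$. The diagonal blocks are $\varepsilon\delta|_{\ker}$, invertible with inverse $\varepsilon^{-1}(\delta|_{\ker})^{-1}$ by hypothesis (3), and $d|_{d(L)}+\varepsilon\,q\delta|_{d(L)}=d|_{d(L)}(\id+\varepsilon\,I\,q\delta)$, invertible via a geometric series in $\varepsilon$ (with $I=(d|_{d(L)})^{-1}$ and $q$ the projection onto $d(L)$). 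Hence $d+\varepsilon\delta$ is invertible on $L^{\mathcal{A}}$.

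Next I would apply the Birkhoff decomposition (\cite{EFGM}) to factor $\varphi=\varphi^-\varphi^+$ with $\varphi^\pm\in G^{\mathcal{A}^\pm}$. Using the composition formula for $\log_{d+\varepsilon\delta}$ recalled in Section \ref{secder}, the identity $\log_{d+\varepsilon\delta}(\varphi)=u$ rewrites as
\[
u=\log_{d+\varepsilon\delta}(\varphi^+)+(\varphi^+)^{-1}\gamma\,\varphi^+,\qquad\gamma:=\log_{d+\varepsilon\delta}(\varphi^-),
\]
i.e.\ $(d+\varepsilon\delta)(\varphi^+)+\gamma\varphi^+=\varphi^+u$. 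Since $d$ and $\delta$ act $\mathcal{A}$-linearly they preserve $L^{\mathcal{A}^+}$, so $\log_{d+\varepsilon\delta}(\varphi^+)\in L^{\mathcal{A}^+}$; together with $u\in L^{\mathcal{A}^+}$ and $\varphi^+\in G^{\mathcal{A}^+}$ this forces $\gamma\in L^{\mathcal{A}^+}$.

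The technical heart is to identify $\gamma$ more precisely. The same $\mathcal{A}$-linearity argument places $\log_d(\varphi^-)$ and $\log_\delta(\varphi^-)$ in $L^{\mathcal{A}^-}$; writing $\log_\delta(\varphi^-)=\sum_{n\geq 1}\varepsilon^{-n}b_n$ with $b_n\in L^k$,
\[
\gamma=\log_d(\varphi^-)+\varepsilon\log_\delta(\varphi^-)=b_1+\Bigl(\log_d(\varphi^-)+\sum_{n\geq 2}\varepsilon^{1-n}b_n\Bigr),
\]
the bracketed term being $\mathcal{A}^-$-valued. Matching $\mathcal{A}^\pm$-parts against $\gamma\in L^{\mathcal{A}^+}$ gives $\gamma=b_1$ and $\log_d(\varphi^-)=-\sum_{n\geq 2}\varepsilon^{1-n}b_n$. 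The main obstacle is to show $b_n=0$ for all $n\geq 2$, equivalently that $\log_\delta(\varphi^-)=\varepsilon^{-1}\beta$ is a simple pole (whence automatically $\log_d(\varphi^-)=0$). I would prove this by induction on graduation, exploiting the Bogoliubov recursion for $\varphi^-$ stemming from the Rota-Baxter projection onto $\mathcal{A}^-$. The base case $n=1$ is immediate from the explicit block inverse above: $\varphi^-_1=\varepsilon^{-1}(\delta|_{\ker})^{-1}(p(u_1))\in\varepsilon^{-1}\ker_L d$, already of the required form. The induction step uses hypothesis (2) (so that $\delta$ respects the $\ker_L d$-valued structure) together with the group structure of $G^{\mathcal{A}^-}$, to propagate both $d(\varphi^-)=0$ and the simple-pole form of $\log_\delta(\varphi^-)$ through the recursion; in particular, the Magnus expansion of $\log_{d+\varepsilon\delta}(\varphi^-)$ and the Bogoliubov recursion together preserve the subset $\exp(L^{\mathcal{A}^-}\cap\ker_L d)\subset G^{\mathcal{A}^-}$.

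Granting the induction, $\gamma=\beta:=b_1\in\ker_L d$; indeed $d(\varphi^-)=0$ places $\alpha^-:=\log\varphi^-$ in $L^{\mathcal{A}^-}\cap\ker_L d$ by the Proposition identifying $G^0=\exp(\ker_L d)$, and then $\log_\delta(\varphi^-)$, being a Magnus-type Lie series in $\alpha^-,\delta(\alpha^-)\in\ker_L d$, takes values in the subalgebra $\ker_L d$ by hypothesis (2). The equation of the second paragraph becomes $(d+\varepsilon\delta)(\varphi^+)+\beta\varphi^+=\varphi^+u$, and specialization at $\varepsilon=0$ on $L^{\mathcal{A}^+}$ yields $d(\varphi^{\mathrm{ren}})+\beta\varphi^{\mathrm{ren}}=\varphi^{\mathrm{ren}}u$ with $\varphi^{\mathrm{ren}}:=\varphi^+|_{\varepsilon=0}$, so $\varphi^{\mathrm{ren}}$ $d$-conjugates $u$ to the $d$-normal form $\beta$ (Theorem \ref{thnor}). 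The identity $\log_\delta(\varphi^-)=\varepsilon^{-1}\beta$ is precisely $\delta\varphi^-_\varepsilon=\varphi^-_\varepsilon(\varepsilon^{-1}\beta)$.
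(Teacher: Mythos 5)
Your overall architecture coincides with the paper's: the same block description of $d+\eps\del$ relative to $L=\ker_L d\oplus d(L)$ (your block-triangular inverse is exactly the paper's explicit operator $I_\eps$), then Theorem \ref{rightinv}, the Birkhoff factorization, the rewriting of $\log_{d+\eps\del}(\varphi)=u$ via the composition formula, and the $\mathcal{A}^-/\mathcal{A}^+$ matching that forces $\gamma=\log_{d+\eps\del}(\varphi^-)\in L^k$. All of that part is correct. The final steps (deducing $\beta\in\ker_L d$ and $\log_\del(\varphi^-)=\eps^{-1}\beta$ from $d(\varphi^-)=0$, and the specialization at $\eps=0$) are also correct and match the paper.

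The genuine gap is the step you yourself flag as the technical heart and then only assert: that $b_n=0$ for $n\geq 2$, equivalently that $\varphi^-\in\exp(\ker_{L^{\mathcal{A}^-}}d)$. This is precisely the paper's Lemma \ref{lemconst}, and your sketch (``the Magnus expansion and the Bogoliubov recursion together preserve the subset $\exp(L^{\mathcal{A}^-}\cap\ker_L d)$'') restates the conclusion without supplying the argument that makes the induction close. The decisive mechanism is a pole count through $I_\eps$: in graduation $n$, writing $\gamma_n=(d+\eps\del)(\al_n^-)+P_{n-1}$ with $\gamma_n\in L^k_n$ and, by the induction hypothesis $\al^-_1,\dots,\al^-_{n-1}\in\ker_{L^{\mathcal{A}^-}}d$, the Lie polynomial $P_{n-1}=P_{n-1}(\al^-,\eps\del(\al^-))$ lies in $\ker_{L^{\mathcal{A}^-}}d$ (here hypothesis (2) and the fact that $\ker_L d$ is a sub-Lie algebra are used). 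Decomposing $\gamma_n=q(\gamma_n)+p(\gamma_n)$ and applying $I_\eps$, one gets
\[
I_\eps\bigl(q(\gamma_n)\bigr)=\al^-_n-I_\eps\bigl(p(\gamma_n)-P_{n-1}\bigr).
\]
Now $I_\eps$ sends $\ker_{L^{k[\eps^{-1}]}}d$ into $\eps^{-1}\del^{-1}(\cdot)\subset\ker_{L^{\mathcal{A}^-}}d$, whereas on $d(L^k_n)$ it is the pole-free series $(\id-\del^{-1}p\del)\circ I\circ\sum_k(-\eps)^k(q\del I)^{\circ k}$, with values in $L^{\mathcal{A}^+}_n$. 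The right-hand side above therefore lies in $L^{\mathcal{A}^-}_n$ while the left-hand side lies in $L^{\mathcal{A}^+}_n$, and $L^{\mathcal{A}^+}_n\cap L^{\mathcal{A}^-}_n=\{0\}$ forces $q(\gamma_n)=0$ and $\al^-_n\in\ker_{L^{\mathcal{A}^-}}d$. Without this observation the induction step does not go through; with it, your proof becomes the paper's.
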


Note that this result still hold on a Lie algebra $L^{\mathcal{B}}$ by considering Laurent series in $\mathcal{A}=\mathcal{B}[[\eps]][\eps^{-1}]$. As we shall see in section \ref{locres}, the normal form $\beta$ is an analog to the $\beta$ function that appears in perturbative quantum field theory (see \cite{ck2}, \cite{EFGBP}).

In the case $d=ad_{x\partial_x}$ on $L=y^2\CC[[x]]\partial_y$, the reader can check that one can choose $\delta=ad_{x\partial_x+y\partial_y}$, that coincides with the grading $Y$. In this case, $d+\varepsilon \delta=ad_{(1+\varepsilon)x\partial_x+\varepsilon y\partial_y}$ and for a given element
\[
u=a(x)y^2\partial_y,\quad a(x)=\sum_{n\geq 0} a_n x^n 
\]
A solution of $(d+\varepsilon \delta)\varphi=\varphi u$ is given by $\varphi(\varepsilon)=\exp(b(x)y^2\partial_y)$ with
\[
b(x)=\sum_{n\geq 0} \frac{a_n}{n(1+\varepsilon)+\varepsilon} x^n = \frac{a_0}{\varepsilon}+\sum_{n\geq 1} \frac{a_n}{n(1+\varepsilon)+\varepsilon} x^n 
\]
where fractions in $\varepsilon$ identify with Laurent series and, thanks to the simplicity of the Lie algebra,
\[
\varphi^-=\exp\left(\frac{a_0}{\varepsilon}y^2 \partial_y\right ),\quad \varphi^+=\exp\left(\sum_{n\geq 1} \frac{a_n}{n(1+\varepsilon)+\varepsilon} x^n y^2 \partial_y\right )
\]
so that a normal form of $u=a(x)y^2\partial_y$ is
\[
\beta=(\varphi^-_{\eps})^{-1}(\varepsilon \del \varphi^-_{\eps})=\varepsilon \frac{a_0}{\varepsilon}y^2 \partial_y = a_0 y^2 \partial_y.
\]

\subsection{Proof of theorem \ref{noren}.}

Let $d$ a graded derivation on $L=L^{k}$ such that $L=\ker_L \oplus d(L)$ and $p=p_d$, $q=\id -p$ the respective projections on the kernel of $d$ on $L$ and its image. It is clear that $d$, $p$ and $q$ extend to $L^{\mathcal{A}}$ with $L^{\mathcal{A}}=\ker_{L^{\mathcal{A}}} d \oplus d(L^{\mathcal{A}})$. For example, if $u=\sum_{n\geq 1} u_n \in L^{\mathcal{A}}$,
\begin{equation} \label{LAcomp}
u_n=\sum_{m\geq M_n} \varepsilon^m u_{n,m},\quad M_n\in\mathbb{Z},\ u_{n,m}\in L^k_n
\end{equation} and $d(u)=\sum_{n\geq 1} d(u_n)$ with
\[
d(u_n)=\sum_{m\geq M_n} \varepsilon^m d(u_{n,m})
\]

The identity $L^{\mathcal{A}}=\ker_{L^{\mathcal{A}}} d \oplus d(L^{\mathcal{A}})$ implies that the restriction of $d$ from $d(L^k)$ (or $d(L^{\mathcal{A}})$) to itself is invertible and we note $I$ this graded inverse. On the same way $\delta$ can be extended to $L^{\mathcal{A}}$, its restriction from $\ker_L$ (or $\ker_{L^{\mathcal{A}}}$) to itself is invertible and we note abusively $\delta^{-1}$ its inverse on $\ker_L$ (or $\ker_{L^{\mathcal{A}}}$). Note that all these operators are graded.

The first step of the proof is based on the following lemma:
\begin{lem} The endomorphism $d+\varepsilon \delta$ is an invertible derivation on $L^{\mathcal{A}}$ and its (graded) inverse is the linear map $I_{\varepsilon}$ defined for $u\in L^{\mathcal{A}}$ by 
\begin{equation}
I_{\eps}(u)=\eps^{-1}\del^{-1}(p(u))+(\id -\del^{-1} \circ p\circ \del)\circ I \left (\sum_{k\geq 0} (-1)^k \eps^k
  (q\circ \del \circ I)^{\circ^k} (q(u))\right ).
\end{equation}
\end{lem}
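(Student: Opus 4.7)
Since $d$ and $\delta$ are graded $k$-linear derivations on $L=L^k$, their $\mathcal{A}$-linear extensions to $L^{\mathcal{A}}$ remain graded derivations, and $d+\eps\delta$ is their sum, hence a graded derivation. To prove $d+\eps\delta$ is invertible and to identify the inverse, the plan is to solve, for each $u\in L^{\mathcal{A}}$, the equation $(d+\eps\delta)(v)=u$; the unique solution will turn out to be the stated formula for $I_\eps(u)$.

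The hypothesis $L=\ker_L d\oplus d(L)$ extends $\mathcal{A}$-linearly and degreewise to $L^{\mathcal{A}}=\ker_{L^{\mathcal{A}}}d\oplus d(L^{\mathcal{A}})$, so one can write $u=p(u)+q(u)$ and look for $v=p(v)+q(v)$ in this decomposition. Using that $d(p(v))=0$, that $d(q(v))\in d(L^{\mathcal{A}})$, and that $\delta$ preserves $\ker_L d$, applying the projections $p$ and $q$ to $(d+\eps\delta)(v)=u$ splits it into the coupled system
\begin{align*}
p(u)&=\eps\,\delta(p(v))+\eps\,p(\delta(q(v))),\\
q(u)&=d(q(v))+\eps\,q(\delta(q(v))).
\end{align*}

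I would solve the second equation first, since it only involves $q(v)$: setting $w=d(q(v))$, so that $q(v)=I(w)$, it becomes $(\id+\eps\,q\delta I)(w)=q(u)$; inverting gives $w=\sum_{k\geq 0}(-\eps)^k(q\delta I)^{\circ k}(q(u))$ and hence the $q$-part of the claimed formula. Substituting this into the first equation and using the invertibility of $\delta$ on $\ker_L d$ yields $p(v)=\eps^{-1}\delta^{-1}(p(u))-\delta^{-1}(p(\delta(q(v))))$. Adding $p(v)+q(v)$ and factoring produces exactly the expression for $I_\eps(u)$; uniqueness at each step shows that $I_\eps$ is a two-sided inverse.

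The only point requiring care is the convergence of the Neumann series $\sum_{k\geq 0}(-\eps)^k(q\delta I)^{\circ k}$. This is where the finite-dimensionality of each graded component $L^k_n$ enters: on the finite-dimensional piece $d(L^k)_n$ the operator $q\delta I$ is a fixed $k$-linear endomorphism, so $\id+\eps\,q\delta I$ acts on $d(L^{\mathcal{A}})_n=\mathcal{A}\otimes_k d(L^k)_n$ as identity plus a perturbation with entries in $\eps\,k[[\eps]]$, hence is invertible with inverse given $\eps$-adically by the geometric series; the output lies in $\mathcal{A}\otimes_k d(L^k)_n\subset L^{\mathcal{A}}$. Once this is verified, the remainder of the proof is bookkeeping.
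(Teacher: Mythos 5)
Your proof is correct, and it rests on the same two pillars as the paper's: the decomposition $L^{\mathcal{A}}=\ker_{L^{\mathcal{A}}}d\oplus d(L^{\mathcal{A}})$ with its projectors $p,q$, and the $\eps$-adically convergent Neumann series inverting $\id+\eps\, q\circ\del\circ I$ on $d(L^{\mathcal{A}})$. The difference is one of organization, and it is a real one: the paper takes the formula for $I_\eps$ as given and verifies the two composites $(d+\eps\del)\circ I_\eps=\id$ and $I_\eps\circ(d+\eps\del)=\id$ separately, the second of which requires a somewhat delicate telescoping computation using $I\circ q\circ d=q$ and $q\circ\del\circ p=0$. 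You instead project the equation $(d+\eps\del)(v)=u$ onto $\ker_{L}d$ and $d(L)$, solve the resulting triangular system (the $q$-component is autonomous in $q(v)$, then the $p$-component is determined by invertibility of $\del$ on $\ker_L d$), and recover the stated formula together with uniqueness of the solution; bijectivity then gives the two-sided inverse for free, so you never need the paper's second computation. What your route buys is a derivation that explains where the formula comes from and a single argument in place of two; what it requires, and what you do supply, is that every reduction step be an equivalence (in particular that $w\mapsto I(w)$ bijectively reparametrizes $q(v)$ over $d(L^{\mathcal{A}})$ and that $\id+\eps\, q\del I$ is invertible on each graded component). Your treatment of convergence via the finite-dimensional graded pieces and the $\eps$-adic completeness of $k[[\eps]][\eps^{-1}]$ matches the role the paper assigns to equation (\ref{LAcomp}).
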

\begin{proof} It is a matter of fact to check that $d+\varepsilon \delta$ and $I_{\varepsilon}$ are well-defined: as all the operators do respect the graduation, we can work on graded components $L^{\mathcal{A}}_n$ with elements as in equation (\ref{LAcomp}). It remains to prove that $(d+\varepsilon \delta)\circ I_{\varepsilon}= I_{\varepsilon}\circ(d+\varepsilon \delta)=\id_{L^{\mathcal{A}}}=\id$. Since  $\ker_{L^{\mathcal{A}}} d$ is stable by $\del$, $d\circ \delta^{-1}\circ p=0$ thus
\[
(d+\eps \del)(\eps^{-1}\del^{-1}(p(u)))=p(u)
\]
and,  when restricted to $d(L^{\mathcal{A}})$, 
\[
(d+\eps \del) \circ (\id -\del^{-1} \circ p\circ \del)\circ I =\id +\eps \del \circ I -\eps p\circ \del \circ I=\id +\eps q\circ \del \circ I.
\]
If we apply to
\[
\sum_{k\geq 0} (-1)^k \eps^k
  (q\circ \del \circ I)^{\circ^k} (q(u))\in d(L^{\mathcal{A}})
\]
it is now clear that
\[ (d+\eps
\del)(I_{\eps}(u))=p(u)+q(u)=u
\]
thus $(d+\eps \del)\circ I_\eps =\id$. In order to prove that $I_\eps \circ (d+\eps \del)=\id$, let us notice that\[
\eps^{-1} \del^{-1} \circ p\circ (d+\eps \del)=\del^{-1} \circ p\circ \delta,
\]
$I\circ q \circ d=q$ and $q\circ \del \circ  p=0$. We get

\begin{eqnarray*}
\sum_{k\geq 0} (-1)^k \eps^k
  (q\circ \del \circ I)^{\circ^k} \circ q\circ (d+\eps \del) & =& \sum_{k\geq 0} (-1)^k \eps^k
  (q\circ \del \circ I)^{\circ^k} \circ q\circ d \\
  & &- \sum_{k\geq 1} (-1)^k \eps^k
  (q\circ \del \circ I)^{\circ^{k-1}} \circ q\circ \del \\
  &=& q\circ d \\
  & &+\sum_{k\geq 1} (-1)^k \eps^k
  (q\circ \del \circ I)^{\circ^{k-1}} \circ (q\circ \del \circ I\circ q\circ d- q\circ \del) \\
  &=& q\circ d \\
  & &+\sum_{k\geq 1} (-1)^k \eps^k
  (q\circ \del \circ I)^{\circ^{k-1}} \circ (q\circ \del \circ  q- q\circ \del) \\
  &=& q\circ d \\
  & &-\sum_{k\geq 1} (-1)^k \eps^k
  (q\circ \del \circ I)^{\circ^{k-1}} \circ (q\circ \del \circ  p) \\
  &=& q\circ d 
  \end{eqnarray*}
and finally
\begin{eqnarray*}
I_\eps \circ (d+\eps \del) &=& \del^{-1} \circ p\circ \delta +(\id -\del^{-1} \circ p\circ \del)\circ I \circ q \circ d \\
&=& \del^{-1} \circ p\circ \del+(\id -\del^{-1} \circ p\circ \del)\circ q \\
&=& \del^{-1} \circ p\circ \del \circ p +q = p+q =\id \\
\end{eqnarray*}
\end{proof}

Note that the expression of this inverse can be simplified if we assume that $d$ and $\delta$ commute.  Since $d+\eps \delta$ is invertible on $L^{\mathcal{A}}$, thanks to theorem \ref{rightinv}, the equation $(d+\eps \delta)\varphi =\varphi u$ has a solution $\varphi=\varphi(\eps)\in G^{\mathcal{A}}$ for any $u$ in $L^{\mathcal{A}}$ or $L$. Thanks to the Birkhoff decomposition (see \cite{EFGM}), we have
\[ \varphi_(\eps)=\varphi^-(\eps) \varphi^+(\eps) ,\quad
\varphi^{\pm}(\eps)=\exp \left(\al^{\pm}(\eps) \right),\ \al^{\pm}(\eps) \in
L^{\mathcal{A}^\pm}\]
If $u\in L$, since $(d+\eps \delta)\varphi =\varphi u$ we get
\begin{eqnarray*}
(d+\eps \delta)\varphi &=& (d+\eps \delta)(\varphi^- \varphi^+) \\
 &=& \varphi^-\left((d+\eps \delta)\varphi^+\right)+\left((d+\eps \delta)\varphi^-\right)\varphi^+ \\
 &=& \varphi^- \varphi^+ u
\end{eqnarray*}
and then
\begin{equation} \label{eqbir}
\left((d+\eps \delta)\varphi^+\right)(\varphi^+)^{-1}+(\varphi^-)^{-1} \left((d+\eps \delta)\varphi^-\right)=\varphi^+ u(\varphi^+)^{-1}
\end{equation}

But $(\varphi^-)^{-1}((d+\eps
\del)\varphi^-)$ is in
$L^{k[\eps^{-1}]}$ whereas the other terms are in  $L^{k[[\eps]]}$
thus these two parts of the identity do not depend on $\eps$. Let $\beta=(\varphi^-)^{-1}\left((d+\eps
\del)\varphi^-\right) \in L^k$ (not in $L^{\mathcal{A}}$!), then
\[((d+\eps \del)\varphi^+ +\beta \varphi^+=\varphi^+ u\]
and for $\eps=0$ the element $\varphi^\text{ren}=\varphi^+(0)$ conjugates $u$  to $\beta$. It remains to prove that $\beta$ is a normal form, that is $d(\beta)=0$. 
Note that if $\varphi^-=\exp(\alpha)$ with $d(\alpha)=0$, then $\del \varphi^-= \varphi^-(\eps^{-1} \beta) $ with $d(\beta)=0$ (see lemma \ref{dexp}). The proof of this last property is based on the following lemma: 
\begin{lem} \label{lemconst} Let $\psi\in G^{\mathcal{A}^-}$. If \[
\log_{d+\eps \del}(\psi)=\psi^{-1}. (d+\eps \del)(\psi)\in L^k
\]
then $\psi\in \exp(\ker_{L^{\mathcal{A}^-}} d)$
\end{lem}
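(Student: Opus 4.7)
Setting $\alpha=\log\psi\in L^{\mathcal A^-}$ and $\gamma=\log_{d+\eps\del}(\psi)\in L^k$, the plan is to prove by induction on the graduation $n$ that each component $\alpha_n$ lies in $\ker_{L^{\mathcal A^-}} d$. Applying the Magnus formula of lemma \ref{dexp} to the derivation $d+\eps\del$, one has at graduation $n$
\[
\gamma_n=(d+\eps\del)(\alpha_n)+P_{n-1}\bigl(\alpha,(d+\eps\del)(\alpha)\bigr),
\]
where $P_{n-1}$ is the Lie polynomial of equation (\ref{loggrad}) in $\alpha_k$ and $(d+\eps\del)(\alpha_k)$ for $k<n$. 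Under the inductive hypothesis $\alpha_k\in\ker_{L^{\mathcal A^-}}d$ for $k<n$ (vacuous when $n=1$), the stability $\del(\ker d)\subset\ker d$ yields $(d+\eps\del)(\alpha_k)=\eps\del(\alpha_k)\in\ker_{L^{\mathcal A}}d$, and since $\ker d$ is a sub-Lie algebra of $L$, the Lie polynomial $P_{n-1}$ itself lies in $\ker_{L^{\mathcal A}}d$.

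The next step is to apply the projector $q=\id-p$ onto $d(L)$ to the Magnus identity. Using $q(P_{n-1})=0$, $d(p(\alpha_n))=0$, and $q(\del(p(\alpha_n)))=0$ (once more by $\del(\ker d)\subset\ker d$), this gives
\[
q(\gamma_n)=d\bigl(q(\alpha_n)\bigr)+\eps\, q\bigl(\del(q(\alpha_n))\bigr)=:\tilde d\bigl(q(\alpha_n)\bigr),
\]
where $\tilde d:=d+\eps\,q\circ\del$ is an endomorphism of $d(L^{\mathcal A})$. On the finite-dimensional graded component $d(L^{\mathcal A}_n)$, the operator $\tilde d$ is an $\eps$-perturbation of the invertible $d|_{d(L^k_n)}$; its determinant is a polynomial in $\eps$ with nonzero constant term, hence invertible in $k[[\eps]]$, so $\tilde d^{-1}$ restricts to a $k[[\eps]]$-linear endomorphism of $d(L^{k[[\eps]]}_n)$. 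Consequently $q(\alpha_n)=\tilde d^{-1}(q(\gamma_n))\in d(L^{k[[\eps]]}_n)$.

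On the other hand, $q$ being $\mathcal A$-linear, $q(\alpha_n)$ also lies in $d(L^{\mathcal A^-}_n)$; the direct sum $\mathcal A=\mathcal A^-\oplus k[[\eps]]$ then forces $q(\alpha_n)\in d(L^{\mathcal A^-}_n)\cap d(L^{k[[\eps]]}_n)=\{0\}$, so $\alpha_n=p(\alpha_n)\in\ker_{L^{\mathcal A^-}}d$, completing the induction. The chief obstacle I expect is the clean identification $q\circ(d+\eps\del)=\tilde d\circ q$ on $L^{\mathcal A}$, since both the decomposition $L=\ker_L d\oplus d(L)$ and the stability $\del(\ker d)\subset\ker d$ must be combined carefully to replace the non-invertible $d$ by the $\eps$-invertible $\tilde d$ in a way that confines $q(\alpha_n)$ simultaneously to the $\mathcal A^-$- and $k[[\eps]]$-valued submodules.
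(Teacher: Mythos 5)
Your proof is correct, and its skeleton coincides with the paper's: induction on the graduation via the Magnus formula of lemma \ref{dexp}, the stability of $\ker_L d$ under $\del$ to reduce $(d+\eps \del)(\alpha_k)$ to $\eps\del(\alpha_k)\in\ker_L d$ so that $P_{n-1}$ lands in the sub--Lie algebra $\ker_{L^{\mathcal{A}}}d$, and a final clash between strictly negative and nonnegative powers of $\eps$. Where you genuinely diverge is the mechanism that forces $q(\alpha_n)=0$. The paper splits the right-hand side as $y_n+z_n$ with $y_n=q(\gamma_n)\in d(L^k_n)$ and $z_n\in\ker_{L^{k[\eps^{-1}]}}d$, and reads off from the \emph{explicit} series formula for $I_\eps=(d+\eps\del)^{-1}$ (established in the preceding lemma) that $I_\eps(z_n)=\eps^{-1}\del^{-1}(z_n)\in\ker_{L^{\mathcal{A}^-}}d$ while $I_\eps(y_n)\in L^{\mathcal{A}^+}_n$, whence $I_\eps(y_n)\in L^{\mathcal{A}^+}_n\cap L^{\mathcal{A}^-}_n=\{0\}$. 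You instead push the Magnus identity through $q$ to get $q(\gamma_n)=\tilde d(q(\alpha_n))$ with $\tilde d=d+\eps\, q\circ\del$ acting on $d(L)$, and establish the invertibility of $\tilde d$ over $k[[\eps]]$ on each finite-rank graded component by a determinant argument, confining $q(\alpha_n)$ to $d(L^{k[[\eps]]}_n)\cap d(L^{\mathcal{A}^-}_n)=\{0\}$. The two devices are equivalent: your $\tilde d^{-1}$ is exactly $q\circ I_\eps$ restricted to $d(L)$, i.e. $I\circ\sum_{k\geq 0}(-1)^k\eps^k(q\circ\del\circ I)^{\circ k}$, so your Cramer-type argument simply replaces the paper's Neumann series. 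What your route buys is self-containedness (no appeal to the explicit formula for $I_\eps$); what it loses is that the paper's version also exhibits $\alpha_n=I_\eps(z_n)$ and, more importantly, records $q(\gamma_n)=0$ explicitly --- the fact that the normal form $\beta$ lies in $\ker_{L^k}d$ --- which is the form of the conclusion actually reused in the proof of theorem \ref{noren}; in your write-up this is available (it follows from $q(\gamma_n)=\tilde d(0)=0$) but is left implicit.
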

%\begin{lem} \label{dmlin}Let $x=\sum_{k=1}^N\eps^{-k}x_k$
%($x_k \in L$, $N\geq1$) such that 
%\[(d+\eps \del)(x)=\sum_{k=0}^M \eps^{-k}y_k\] with $y_k \in \ker_L d$ for $\mathbf{k\geq 1}$. Necessarily, $y_0$ is also in
%$\ker_L d$ and $d(x)=0$.
%\end{lem} 
\begin{proof} The proof of the lemma is based on the following observation: If, for a given $n\geq 1$, 
\[x_n=\sum_{M\leq k\leq -1} \eps^k x_{n,k}\in L^{\mathcal{A}^-}_n
\]
is such that $(d+\eps \del)(x_n)=y_n+z_n \in L^{\mathcal{A}}_n$ with $y_n\in d(L^k_n)$ and \[
z_n =\sum_{M\leq k\leq 0} \eps^k z_{n,k}\in  \ker_{L^{k[\eps^{-1}]}} d 
\]
then $y_n=0$ and $x_n=I_{\eps}(z_n)\in \ker_{L^{\mathcal{A}^-}} d$.
One can check that $z_{n,k}\in \ker_{L^k} d$ and
\[
I_{\eps}(z_n) =\eps^{-1} \del^{-1}(z_n)=\sum_{M-1\leq k\leq -1} \eps^k \del^{-1}(z_{n,k+1}) \in \ker_{L^{\mathcal{A}^-}} d\cap L^{\mathcal{A}^-}_n
\]
On the other hand, if $y_n\in d(L^k_n)$,
\[I_{\eps}(y_n) =(\id -\del^{-1} \circ p\circ \del)\circ I \left (\sum_{k\geq 0} (-1)^k \eps^k
  (q\circ \del \circ I)^{\circ^k} (y_n)\right )=x-I_{\eps}(z_n)\in L^{\mathcal{A}^+}_n \cap L^{\mathcal{A}^-}_n.
\]
It follows that necessarily $I_{\eps}(y_n)=0$ (thus $y_n=0$) and $x_n=I_{\eps}(z_n)\in \ker_{L^{\mathcal{A}^-}} d$.

This is the key to prove the lemma. If $\beta=(\psi)^{-1} \left((d+\eps
\del)\psi \right)=\log_{d+\eps \del} (\psi) \in L^k$ with
$\psi=\exp(\al)$ ($\al\in L^{\mathcal{A}^-}$) then, using lemma \ref{dexp} and the graduation (see equation (\ref{loggrad})), we get:
\[
\beta_n = (d+\eps \del)(\alpha_n)+P_{n-1}(\alpha,(d+\eps \del)(\alpha)) \in L^k_n
\]
where $P_{n-1}$ is a Lie polynomial in $\alpha_k$ and $(d+\eps \del)(\alpha_k)$ with $k<n$.

In graduation 1:
\[(d+\eps \del)\al_1=\beta_1\] and, thanks to the previous result (with $y_1=q(\beta_1) \in d(L^k_1)$ and $z_1=p(\beta_1)\in \ker_{L^{k[\eps^{-1}]}} d$), $q(\beta_1)=0$ and finally $\beta_1 \in \ker_{L^k} d$,  $\al_1 \in \ker_{L^{\mathcal{A}^-}} d$ and $\del \al_1=\eps^{-1} \beta_1$.
 Let us suppose now that
 \[
 d(\al_1)=\dots=d(\alpha_{n-1})=0=d(\beta_1)=\dots=d(\beta_{n-1})
 \]
 we have
 \[(d+\eps \del)(\alpha_n)=\beta_n-P_{n-1}(\alpha,(d+\eps \del)(\alpha))=\beta_n-P_{n-1}(\alpha,\eps \del(\alpha))
 \]
 with
 \[ P_{n-1}(\alpha,\eps \del(\alpha))=\sum_{k=1}^{n-1}\sum_{i=1}^{n-k}\frac{(-1)^i}{(i+1)!}\sum_{n_1+..+n_i=n-k}ad_{\alpha_{n_1}}(
ad_{\alpha_{n_2}}...(ad_{\alpha_{n_i}}(\varepsilon \delta(\alpha_{k})))...) 
\]
Since, for any $k$, $\alpha_k\in L^{\mathcal{A}^-}$, it is clear that we can set
\[
\begin{array}{rcl}
x_n &=& \alpha_n \in L^{\mathcal{A}^-}_n \\
z_n &=& p(\beta_n)-P_{n-1}(\alpha,\eps \del(\alpha)) \in \ker_{L^{k[\eps^{-1}]}} d\\
y_n &=& q(\beta_n) \in  d(L^k_n) \\
\end{array}
\]
and we already proved that $y_n=0$ (thus $d(\beta_n)=0$) and $x_n=\alpha_n \in \ker_{L^{\mathcal{A}^-}} d$

It follow recursively that $d(\alpha_k)=0$, $\beta_k\in \ker d$ for $k\geq 1$
\end{proof}

Using this lemma, together with the Birkhoff decomposition, $\beta=\log_{d+\eps \del} \varphi^- \in L^k$ is a normal form related to  $\varphi^-$ by 
\begin{equation} \label{resid}
\del \varphi^-= \varphi^-(\eps^{-1} \beta) 
\end{equation}

\section{Further developments.}\label{secmore}

We enclose in this section many ideas, developments that are suggested by the strong similarity between our results, stemming from dynamical systems, and the algebraic machinery developed in the context of perturbative quantum field theory.
 
\subsection{Locality and Residues} \label{locres}
 The last identity (\ref{resid}) has to be related to residues. If we write
 \[
 \varphi^-=1+\sum_{k\geq 1}\eps^{-k} \varphi^-_k, \quad \varphi^-_k \in \mathcal{U}^{k}
 \]
 then, the residue $\Res \varphi^-=\varphi^-_1$ is such that
 \[
 \del \Res \varphi^-=\beta
 \]
and it looks very close to the beta function in perturbative quantum field theory, see \cite{ck2}, \cite{EFGBP} and \cite{Man}. This concept is related to the "locality" of counter terms (that is $\varphi^-$). In our framework one can define on $L^{\mathcal{A}[[\tau]]}$ (and then on $\mathcal{U}^{\mathcal{A}[[\tau]]}$) the automorphism $\theta_\tau$:
\[\theta_\tau (x)=e^{\tau \eps \del} x
\]
Following section 7 in \cite{EFGBP} (in the case $\del=Y$), $\varphi \in G^{\mathcal{A}}$ is $\del$--local if for any $\tau$, the Birkhoff decomposition of $\varphi^\tau=\theta_\tau (\varphi)$ is such that 
\[
\partial_\tau (\varphi^\tau)^-=0
\]   
that is the counter terms do not depend on $\tau$. Under the assumption that $[d,\delta]=0$ (thus $[d,\theta_\tau ]=0$), the same results as in \cite{EFGBP} could  be obtained here and one can check that our renormalization procedure delivers a $\del$--local element. In fact, we have $\partial_\tau \varphi^\tau=\theta_\tau (\eps \del \varphi)$ and
\[
(d+\partial_\tau)\varphi^\tau=\theta_\tau \left((d+\eps \del)(\varphi)\right)=\varphi^\tau u^\tau
\]
and after Birkhoff decomposition (as in equation (\ref{eqbir})):
\[
\left((d+\partial_\tau)(\varphi^\tau)^+\right)((\varphi^\tau)^+)^{-1}+((\varphi^\tau)^-)^{-1} \left((d+\partial_\tau)(\varphi^\tau)^-\right)=(\varphi^\tau)^+ u^\tau((\varphi^\tau)^+)^{-1}
\]
but as in the proof of theorem \ref{noren}, this identity shows that  $((\varphi^\tau)^-)^{-1} \left((d+\partial_\tau)(\varphi^\tau)^-\right)\in L^{\mathcal{A}[[\tau]]^-}\cap  L^{\mathcal{A}[[\tau]]^+}=\lbrace 0 \rbrace$ and $(d+\partial_\tau)(\varphi^\tau)^-=0$. If we expand in powers of $\tau$, since $(\varphi^0)^-=\varphi^-$ with $d(\varphi^-)=0$, we get
\[
\partial_\tau (\varphi^\tau)^-=0
\]
and this proves the $\del$--locality of $\varphi$.

As we can see here, this renormalization is very similar to the one used in physics. Note also that it could be of some interest to consider "perturbations" of $d$ such as $d+\eps \del_1 +\eps^2 \del _2 +\dots$. On the same way, the following section suggests that other renormalization procedure appear in the framework of dynamical systems.

\subsection{Another "renormalization" : the correction.} \label{seccor}

As pointed out in \cite{EFP}, in pQFT, the attempted (but ill-defined) group-like element $\varphi$ is associated to a Lagrangian $\mathcal{L}$ and the renormalization procedure can be interpreted as an iterative process, based on the graduation of the considered Hopf algebra, that consists in modifying the Lagrangian:
\[
\mathcal{L} \rightarrow \mathcal{L}-\mathcal{L}_1 \rightarrow \mathcal{L}-\mathcal{L}_1 -\mathcal{L}_2 \rightarrow \dots \rightarrow \mathcal{L}^{ren}
\]
so that the renormalized group-like element $\varphi^{ren}$ corresponds to the modified Lagrangian $\mathcal{L}^{ren}$.

The same principle appear in the framework of dynamical systems but the final object differs from a normal form, this is the correction:
\begin{thm} Let $d$ a derivation such that $L=\ker d \oplus d(L)$. For any $u\in \hL$ there exists a unique $u^c \in 
  \ker d$
  such that $u-u^c$ is in the $d$-conjugacy class of $0$. $u_c$ is
  called the correction of $u$. Moreover, if $u \in \ker d$, then $u^c=u$.
\end{thm}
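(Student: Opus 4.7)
The relation $u-u^c\sim_d 0$ says exactly that some $\varphi\in G$ satisfies $d(\varphi)=\varphi(u-u^c)$, i.e.\ $\log_d(\varphi)=u-u^c$. So the theorem amounts to showing that for every $u\in L$ the affine subset $u+\ker_L d$ meets $\log_d(G)$ in a single point. I plan to treat existence and uniqueness separately and then deduce the last sentence from uniqueness.

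For existence I would imitate the recursive schemes of Theorems~\ref{rightinv}, \ref{Fnor} and \ref{thnor}. Writing $\varphi=\exp(\alpha)$ and using Magnus (Lemma~\ref{dexp}), the equation $\log_d(\exp(\alpha))=u-u^c$ reads, at grade $n$,
\[ d(\alpha_n)+P_{n-1}(\alpha,d(\alpha)) = u_n-u^c_n, \]
with $P_{n-1}$ a Lie polynomial in the $\alpha_k,d(\alpha_k)$ for $k<n$. Given the splitting $L=\ker_L d\oplus d(L)$ with projectors $p$ (onto $\ker_L d$) and $q=\id-p$, I set at step $n$
\[ u^c_n := p\bigl(u_n-P_{n-1}\bigr),\qquad \alpha_n := I\bigl(q(u_n-P_{n-1})\bigr), \]
where $I$ is the inverse of $d$ restricted to $d(L)$. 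The recursion closes and produces $u^c\in\ker_L d$ together with a conjugator $\varphi=\exp(\alpha)\in G$.

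For uniqueness, assume $u^c$ and $\tilde u^c$ both work with witnesses $\varphi,\psi\in G$. Setting $\chi=\psi\varphi^{-1}=\exp(\gamma)$ and using the product formula $\log_d(\chi\varphi)=\log_d(\varphi)+\varphi^{-1}\log_d(\chi)\varphi$ recorded after Lemma~\ref{dexp} yields
\[ \varphi^{-1}\log_d(\chi)\,\varphi = \log_d(\psi)-\log_d(\varphi) = u^c-\tilde u^c\in\ker_L d. \]
Denoting $\eta=\log_d(\chi)$, I plan to show by induction on $n$ that $\eta_n=0$ and $\gamma_n\in\ker_L d$. Given the hypothesis up to $n-1$, every term of $P_{n-1}(\gamma,d(\gamma))$ contains a factor $d(\gamma_k)=0$ with $k<n$, so $\eta_n=d(\gamma_n)\in d(L)$; on the other hand Lemma~\ref{uexp} together with $\eta_k=0$ for $k<n$ collapses $(\varphi^{-1}\eta\varphi)_n$ to $\eta_n$, which therefore lies in $\ker_L d$. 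The direct sum forces $\eta_n=0$ and $\gamma_n\in\ker_L d$. Thus $\chi\in\exp(\ker_L d)=G^0$, so $\log_d(\psi)=\log_d(\varphi)$ and $u^c=\tilde u^c$.

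The last sentence of the theorem falls out immediately: if $u\in\ker_L d$ then $u^c:=u$ and $\varphi=1$ trivially witness $u-u^c=0\sim_d 0$, so uniqueness identifies the correction with $u$ itself. The only delicate step is the interlocking induction underlying uniqueness; the key mechanism there is that the Magnus tail $P_{n-1}$ contains an occurrence of $d(\gamma_k)$ in every monomial, so the leading term $d(\gamma_n)$ cannot be cancelled by lower-order contributions and the decomposition $\ker_L d\oplus d(L)$ does the rest.
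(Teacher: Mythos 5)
Your proposal is correct and follows essentially the same route as the paper: the existence recursion with $u^c_n=p(u_n-P_{n-1})$ and $\alpha_n=I(q(u_n-P_{n-1}))$ is identical, and your uniqueness argument (forming $\chi=\psi\varphi^{-1}$, reducing to $\varphi^{-1}\log_d(\chi)\varphi\in\ker_L d$, then the graded induction exploiting $\ker_L d\cap d(L)=\{0\}$) is the same mechanism as the paper's, merely packaged via the cocycle identity for $\log_d$ rather than by manipulating the two conjugacy equations directly.
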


As in the Lagrangian interpretation of renormalization in pQFT, the idea of the proof is to modify $u$, graded component by graded component, so that we can solve the equation:
\[
d\varphi=\varphi (u-u^c_1-u^c_2 -\dots)
\]

\begin{proof}
 Whenever there exists $\varphi$ such that $\log_d(\varphi)=u$, then we
can take $u^c=0$, but, when $d$ is not invertible, such $\varphi$ does
not necessarily exists. Let us consider
\[
u=\sum u_n,\  u^c=\sum u^c_n, \ u-u^c=v=\sum v_n, \
\varphi=\exp\left ( \sum \alpha_n \right ) 
\]
The equation $\log_d(\varphi)=v=u-u^c$ reads (see lemma \ref{dexp} and equation (\ref{loggrad})):
\[
d(\al_n)=v_n - P_{n-1}(\al,d(\al))
\]
where \[
P_{n-1}(\al,d(\al)=\sum_{k=1}^{n-1}\sum_{i=1}^{n-k}\frac{(-1)^i}{i+1}\sum_{n_1+..+n_i=n-k}ad_{\al_{n_1}}(
ad_{\al_{n_2}}...(ad_{\al_{n_i}}(d(\al_{k})))...)
\]
depends only on $\al_1,...,\al_{n-1},d(\al_1),...,d(\al_{n-1})$ (with $P_0=0$).
For $n=1$, we must solve:
\[
d(\al_1)=v_1=u_1 -u^c_1
 \]
thus $u^c_1=p(u_1)\in \ker d$ (unique), $v_1=q(u_1)$ and then we can chose $\al_1$ up to an element
of $\ker d$ : $\al_1=I(q(u_1))+z_1$, $z_1\in \ker d$.
Now suppose that  for $1 \leq k \leq n$ we can find
$u^c_1,...,u^c_n$ so that there exists solutions (say $\al_1,
...,\al_n$) to the equations:
\[
\forall  1 \leq k \leq n,\quad d(\al_k)=u_k -u^c_k -
P_{k-1}(\al,d(\al))
\]
for the next equation:
\[d(\al_{n+1})=u_{n+1}-u^c_{n+1}-P_n(\al,d(\al)), 
\]
as $L=\ker d \oplus d(L)$, we must define
\[
u^c_{n+1}=p \left ( u_{n+1}-P_n(\al,d(\al)) \right )
\]
and this ends the (recursive) proof of the existence of $u^c$. But in 
this recursive definition $u^c$ seems to depend on the choice of
$\al$. Indeed $u^c$ is unique : suppose we have $u^1$ and $u^2$ in
$\ker d$ such that $u-u^1\sim_d 0 \sim_d u-u^2$, then there exists
$\varphi  $, $\psi  $ such that:
\[
d(\varphi)=\varphi(u-u^1)  \ , \ d(\psi)=\psi(u-u^2) 
\]
In $\hGL$, let $ \varphi \psi^{-1}=\phi$, then
\[ 
d(\varphi)=\varphi(u-u^1) =\phi \psi(u-u^1)  = d(\phi) \psi+\phi
d(\psi)=
d(\phi)\psi +\phi \psi (u-u^2)
\]
thus
\[
\phi \psi (u^2 -u^1)=d(\phi)\psi
\] or rather
\[
\psi (u^2 -u^1) \psi^{-1} = \log_d(\phi)
\]
Using lemmas \ref{dexp} and \ref{uexp} with $\phi=\exp \al$ and
$\psi^{-1}=\exp \beta$:
\[
\sum_{i\geq 0}\frac{1}{i!}  ad_\beta^i(u^2-u^1)=\sum_{i\geq 0} \frac{(-1)^i}{(i+1)!}ad_\al^i(d(\al)).
\]
Thanks to the graduation we get first $d(\al_1)=u_1^2-u_1^1$: 
since it is in the kernel and the
image of $d$, we get $d(\al_1) =0$ and $u_1^1-u_1^2=0$. One can then
prove recursively on $n$ that, for $1\leq k \leq n$:
\[
d(\al_k)=0\ ,\ u_k^1=u_k^2.
\]
If this is true then the equation reduces, in graduation $n+1$ to
\[
d(\al_{n+1})=u_{n+1}^2 -u_{n+1}^1
\]
thus $d(\al_{n+1})=u_{n+1}^2 -u_{n+1}^1=0$ and this ends the
recursion.
\end{proof}

Note that, $\varphi$ is unique if we assume that $\varphi=\exp(\al)$
with $\al\in d(\hL)$, using the inverse $I$ of the restriction of $d$ from $d(L)$ to $d(L)$. 

We don't know if this correction can be interpreted in the framework pQFT but let us just end by two remarks on the link between normal forms obtained by the "renormalization scheme" described in theorem \ref{noren} and the correction. For a given $\del$ as in theorem \ref{noren}, we get a function $N_{\del}$ from $L$ to $\ker_L d$ such that, for all $u\in L$, $u\sim_d N_{\del}(u)$ and the correction associates to any $u\in L$ a unique $u^c=C(u)$, independent of $\del$, such that  $u-C(u)\sim_d 0$. As pointed out in section  \ref{secnor}, there is no other normal form than $0$ which  is conjugated to $0$:
\[
N_{\del}(u-C(u))=0
\]
thus $u-C(u)$ is a fixed point for the operator $\id_L-N_{\del}$. One could even go further: if we introduce a parameter $t$
then,
\[ \lim_{t \rightarrow 0} t^{-1} C(tu)=   \lim_{t \rightarrow 0}
t^{-1} N_{\del}(tu)=p(u)
\]
so that, in a first approximation, the normal form and the correction coincide. This looks like two renormalizations, with different prescriptions
but with a common ``minimal substraction'' $u\mapsto u-p(u)$.

Under the same hypothesis as in theorem \ref{noren}, since $d+\eps \del$ is invertible on $L^{\mathcal{A}}$ with $\mathcal{A}=k[[\eps]][\eps^{-1}]$, the equation $\log_{d+\eps\del}\varphi=u\in L^k$ has a unique solution $\varphi=\exp(\alpha)\in G^{\mathcal{A}}$. As for the Birkhoff decomposition, the reader can check that there exist $(\al^+,\al^-)\in L^{\mathcal{A}^+}\times L^{\mathcal{A}^-}$ such that, if $\varphi^{\pm}=\exp(\al^{\pm})$,
\begin{equation}
 \log_{d+\eps\del}\varphi=\log_{d+\eps\del}\varphi^+ +\log_{d+\eps\del}\varphi^-=u.
 \end{equation}
 As before, the recursive proof is based on equation (\ref{loggrad}): For $n\geq 1$,
 \begin{eqnarray*}
(d+\eps \del)(\al_n) +P_{n-1}(\al,(d+\eps \del)(\al))&=&(d+\eps \del)(\al^+_n) +P_{n-1}(\al^+,(d+\eps \del)(\al^+))\\
 & & +(d+\eps \del)(\al^-_n)+P_{n-1}(\al^-,(d+\eps \del)(\al^-))
\end{eqnarray*}
thus
\[
\al^+_n+\al^-_n=\al_n+I_{\eps} \left( P_{n-1}(\al,(d+\eps \del)(\al))-P_{n-1}(\al^+,(d+\eps \del)(\al^+))-P_{n-1}(\al^-,(d+\eps \del)(\al^-))\right).
\]
But $\log_{d+\eps\del}\varphi^-\in L^{k[\eps^{-1}]}$, $\log_{d+\eps\del}\varphi^+\in L^{k[[\eps]]}$ and their sum is $u\in L^k$. This means that $\gamma=\log_{d+\eps\del}\varphi^-\in L^k$ and, thanks to lemma \ref{lemconst}, $d(\gamma)=0$ thus
\[
\log_{d+\eps\del}\varphi^+=u-\gamma \text{ with } d(\gamma)=0
\]and this identity makes sense at $\eps=0$. We recover the correction $\gamma=u_c$ by this "additive" Birkhoff decomposition.

\section{Conclusion.}\label{secconc}

This use of "renormalization schemes" is new in the framework of dynamical systems and shall open new perspectives in this area. It shall interacts with J. Ecalle's mould calculus that provides a way to "lift" the computation of diffeomorphisms to computations in the Lie group of the Lie algebra of infinitesimal characters on a shuffle Hopf algebra (see \cite{SNAG}, \cite{FM}). If mould calculus is sufficient to lead to explicit computations, this does not tackle the difficult and crucial question of the analyticity of the computed diffeomorphisms.

Surprisingly, the analytic regularity of such diffeomorphism can sometimes be easily obtained by computing in a Hopf algebras of trees, which is strongly related to the Prelie structure of vector fields (see \cite{FM}). There shall be no major difficulty to adapt the results of this paper to the case of derivations on Prelie algebras.

This may even be the right theoretical framework to deal with the similar questions in "discrete dynamical systems" that are related to  group equations 
\[
\theta \varphi=\psi\varphi, \quad \psi,\varphi\in G
\]
where $\theta$ is a graded automorphism on $L$: $\theta
([x,y])=[\theta(x),\theta(y)]$, with an "invertibility" condition on $\theta-\id_L$. This is the right framework to deal with Hurwitz multizetas (see \cite{OB}) and their regularization. 

Let us end this paper with a remark on the choice of an $\eps$-regularization. In section \ref{secmt}, we illustrated theorem \ref{noren} in the case $d=ad_{x\partial_x}$ on $L=y^2\CC[[x]]\partial_y$ with $\delta=ad_{x\partial_x+y\partial_y}$ and the solution of 
\[
\log_{d+\eps \del} \varphi=a(x)y^2 
\]
was $\varphi(\varepsilon)=\exp(b(x)y^2\partial_y)$ where
\[
(1+\eps)x b'(x)+ \eps b(x)=a(x).
\] 
Theorem \ref{noren} gave then the normal form $a(0)y^2\partial_y$. Following \cite{MenDE}, we could have chosen the regularization $d_\eps=ad_{x^{1-\eps}\partial_x}$, assuming that one can now work on $\CC[[x,x^{\eps}]]$. After expansion $x^\eps=\sum_{n\geq 0}\frac{\eps^n \log^n x}{n!}$ and Birkhoff decomposition, it appear that there exists solutions to the equation
\[
\log_{d} \psi=a(x)y^2 , \quad \psi=\exp(c(x)y^2\partial_y) 
\]
but with a series $c(x)$ in $x$ \textbf{and} $\log x$. On one hand, this result is very close to the results of J. Ecalle on "ramified linearization" (see \cite{ERL}). On the other hand, the apparition of logarithmic terms shall be familiar to the experts of pQFT and shall be related to the toy model described in \cite{EFP} (section 4.2).  

The research leading these results was partially supported by the French National Research Agency under the reference ANR-12-BS01-0017.

\section{Appendix: Dynamical systems} \label{secds}
Let us first recall some results on autonomous analytic local differential
equations (see {\cite{Ilya}} for proofs and details), that is a system (in
dimension $\nu$)
\begin{equation}
  \dot{x} = f (x)
\end{equation}
where $x = (x_1, \ldots, x_{\nu})$ and $f = (f_1, \ldots, f_{\nu})$ is an
analytic map defined in a neighborhood of the origin of $\mathbbm{C}^{\nu}$,
with values in $\mathbbm{C}^{\nu}$ such that $f (0) = 0$. There are several
strategies to study such a local dynamical system. The first one is to compute
the flow (whose existence, uniqueness and analyticity is due to the
Cauchy-Lipschitz theorem), that is the local biholomorphism from
$\mathbbm{C}^{1 + \nu}$ to \ $\mathbbm{C}^{1 + \nu}$ :
\[ \varphi (t, x) = (t, \varphi^t (x)) \]
that describes the local solution, of the system $(t, y (t))$ such that $y (0)
= x$. Note that, since the system is autonomous ($f$ does not depend on $t$),
for small values $t$ and $s$, we have $\varphi^t \circ \varphi^s = \varphi^{t
+ s}$. Although this flow encodes all the properties of the solutions, its
computation does not give many information on the geometry of the solutions.
Since Poincar\'e, in order to get more geometrical information, a better
approach is to study this solutions up to a change of coordinates. Let $\psi$
be a local ($\psi (0) = 0$) biholomorphism of $\mathbbm{C}^{\nu}$, then if $y
(t) = \psi (x (t))$ where $x$ is a solution of the initial system, then :
\[ \dot{y} = \sum_{i = 1}^{\nu} \dot{x}_i \frac{\partial \psi}{\partial x_i}
   (x) = \sum_{i = 1}^{\nu} f_i (x) \frac{\partial \psi}{\partial x_i} (x) = g
   (y) = g (\psi (x)) \]
and the vector field $f$ is transformed into the vector field $g$ by the
equation
\begin{equation}
  \sum_{i = 1}^{\nu} f_i (x) \frac{\partial \psi}{\partial x_i} (x) = g (\psi
  (x))
\end{equation}
This defines an equivalence relation of vector field up to local
biholomorphism and, in order to grab the geometry of the solutions, it seem
reasonable to try to find the most simple vector field $g$ which is
conjugated to $f$. For the sake of simplicity we will suppose in the sequel
that the {\tmem{linear part}} of $f$ is diagonal:
\[ f (x) = \Lambda x + u (x) \]
where $\Lambda$ is a diagonal matrix whose diagonal $\lambda = (\lambda_1,
\ldots, \lambda_{\nu})$ is called the {\tmem{spectrum}}, and the components
of $u = (u_{1,} \ldots, u_{\nu})$ are of valuation at least 2 (for the usual
valuation in $\mathbbm{C}\{x\}$). In this context (as in the context of
perturbative quantum field theory), it seem reasonable to consider $f$ as a
{\tmem{perturbation}} of its linear part and to ask if it is conjugated to its
linear part: Does there exist a local biholomorphism of $\mathbbm{C}^{\nu}$,
tangent to the identity (in order to preserve the linear part) such that:
\[ \sum_{i = 1}^{\nu} f_i (x) \frac{\partial \psi}{\partial x_i} (x) = \sum_{i
   = 1}^{\nu} \lambda_i x_i \frac{\partial \psi}{\partial x_i} (x) + \sum_{i =
   1}^{\nu} u_i (x) \frac{\partial \psi}{\partial x_i} (x) = \Lambda . \psi
   (x) \]
This equation is called the homological equation and it provides recursive
equations on the coefficients of $\psi$. Unfortunately (see for example
{\cite{Mart}} or {\cite{Ilya}}) many difficulties arise in the resolution of
such equation (see also below):
\begin{enumeratenumeric}
  \item If, for any $1 \leqslant j \leqslant \nu$ and any sequence of
  non-negative integers $n = (n_1, \ldots, n_{\nu})$ with $|n| = n_1 + \ldots
  + n_{\nu} \geqslant 2$, we have:
  \begin{equation}
    \left\langle \lambda, n \right\rangle - \lambda_j \not= 0 \hspace{1em} (
    \left\langle \lambda, n \right\rangle = \lambda_1 n_1 + \ldots +
    \lambda_{\nu} n_{\nu}) \label{nonres}
  \end{equation}
  then the vector field is {\tmem{non-resonant}} and one can solve recursively
  the homological equation. Unfortunately, the series $\psi$ is not
  necessarily convergent (i.e. analytic), unless the spectrum satisfies some
  {\tmem{diophantine condition}} (see {\cite{Mart}}).
  
  \item Otherwise, when there are some cancellations of the values \
  $\left\langle \lambda, n \right\rangle - \lambda_j$, then the homological
  equation cannot {\tmem{even formally}} be solved. In this case, the case of
  resonant vector fields, solving recursively the homological equation
  involves some divisions by $0$. This also means that, in this case, the
  vector field cannot be {\tmem{linearized}} : at best, one can try to
  conjugate $f$ to a simpler vector field (but not its linear part) which is
  called a normal (or pre-normal) vector field $g (x) = \Lambda .x + v (x)$
  such that the components $v_j$ of $v$ contains only monomials $x^n =
  x_1^{n_1} \ldots x_{\nu}^{n_{\nu}}$ such that $\left\langle \lambda, n
  \right\rangle - \lambda_j = 0$. This is indeed possible but nothing ensures
  once again that we get an analytic solution, either for $\psi$ nor for $g$.
\end{enumeratenumeric}
This small survey of the situation calls for some remarks. We shall leave
aside the analytic difficulties (for which their still exists open problems)
and focus on the algebraic difficulties, that is computing formal series
rather than analytic series. Even formally, there are some obstruction to the
resolution of the homological equation: we started with the naive guess that,
since $f$ can be seen as perturbation of its linear part, it shall be
conjugated to it. This look like the situation in perturbative quantum field
theory (pQFT), where we see the action as a perturbation of the free action,
and as in pQFT, this gives a ill-defined solution (here the map $\psi$). The
analogy between resonant vector field and pQFT goes even further: in the resonant case, the computation of a normal form
can be obtained by a renormalization procedure (as in pQFT).

As pointed out in {\cite{ck1}} and {\cite{ck2}}, pQFT leads to the computation
of a character (maybe ill-defined) on a graded commutative Hopf algebra, that is to
say to the computation of an element of the Lie group of a complete graded Lie algebra.
The situation is indeed the same in the study of {\tmem{formal}} differential
equation, when considering vector fields as {\tmem{operators}}: let $f$ is a
formal vector field without constant term ($f \in \mathbbm{C}_{\geqslant 1}
[[x_1, \ldots, x_{\nu}]]^{\nu} =\mathbbm{C}_{\geqslant 1} [[x]]^{\nu}$), and
$A \in \mathbbm{C}[[x]]$ a formal series, if $\dot{x} = f (x)$ and $y = A (x)$
then:
\[ \dot{y} = \dot{A (x)} = \sum_{i = 1}^{\nu} \dot{x}_i \frac{\partial
   A}{\partial x_i} (x) = \left( \sum_{i = 1}^{\nu} f_i (x)
   \frac{\partial}{\partial x_i} \right) .A (x) \]
so that we can identify, in a unique way, a vector field $f$ with a
differential operator (even a derivation) $X = X_f$ {\tmem{acting}} on
$\mathbbm{C}[[x]]$:
\[ \sum_{i = 1}^{\nu} f_i (x) \frac{\partial}{\partial x_i} = X (x) \]
On the same way, if $\psi$ is a local formal, invertible diffeomorphism of
$\mathbbm{C}^{\nu}$, that is to say 
\[\psi \in 
\mathbbm{C}_{\geqslant 1} [[x_1, \ldots, x_{\nu}]]^{\nu}
=\mathbbm{C}_{\geqslant 1} [[x]]^{\nu}
\]
with an invertible linear part
$\left( \left( \frac{\partial \psi_i}{\partial x_j} (0) \right) \right)$, then
one can associate an invertible operator (called a substitution automorphism)
$F_{\psi}$ acting on $\mathbbm{C}[[x]]$:
\[ (F_{\psi} .A) (x) = A (\psi (x)) \]
Moreover, it is a matter of fact to check that a linear operator $F$ on
$\mathbbm{C}[[x]]$ such that:
\begin{enumerateroman}
  \item $\psi (x) = (F.x_1, \ldots, F.x_{\nu})$ is a local formal, invertible
  diffeomorphism of $\mathbbm{C}^{\nu}$
  
  \item For any series $A, B$ in $\mathbbm{C}[[x]]$, $F. (\tmop{AB}) = (F.A)
  (F.B)$
\end{enumerateroman}
is the substitution automorphism associated to $\psi$.

The link with Lie algebras can be put this way (see {\cite{Ilya}}):

\begin{prop}
  Let $\mathcal{D}^{}$ (resp. $\mathcal{D}_{\geqslant 1}$) the set of
  derivations $\sum_{i = 1}^{\nu} f_i (x) \frac{\partial}{\partial x_i} = X
  (x)$ such that $f (0) = 0$ (resp. $f$ has no linear part). $\mathcal{D}^{}$
  (resp. $\mathcal{D}_{\geqslant 1}$) is a complete graded Lie algebra (for
  the Lie bracket associated to the composition of operators). Moreover, if
  $\mathcal{G}$ (resp. $\mathcal{G}_{\geqslant 1}$) is the group of
  substitution automorphisms associated to local formal, invertible
  diffeomorphisms (resp. identity-tangent formal diffeomorphisms), then the
  exponential map:
  \[ \exp (X) = \tmop{Id}_{\mathbbm{C}[[x]]} + \sum_{s \geqslant 1}
     \frac{1}{s!} X^s_{} \]
  defines an injective (resp. bijective) map from $\mathcal{D}^{}$ (resp.
  $\mathcal{D}_{\geqslant 1}$) to $\mathcal{G}$ (resp. $\mathcal{G}_{\geqslant
  1}$) whose inverse on $\mathcal{G}_{\geqslant 1}$ is given by the logarithm:
  \[ \log (F) = \sum_{s \geqslant 1} \frac{(- 1)^{s - 1}}{s} (F -
     \tmop{Id}_{\mathbbm{C}[[x]]}) \]
\end{prop}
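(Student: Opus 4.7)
The plan is to introduce a polynomial-degree grading on $\mathcal{D}$, verify convergence of the $\exp$ and $\log$ series in the induced $\mathfrak{m}$-adic filtration of $\mathbb{C}[[x]]$, and then extract the derivation/automorphism correspondence from the Hopf-algebraic primitive-grouplike duality already recalled in Section 2.

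First I would set, for $n \geq 0$, $\mathcal{D}_n := \{\sum_i f_i \partial_{x_i} : f_i \text{ homogeneous of degree } n+1\}$, so that each $\mathcal{D}_n$ is finite-dimensional, $\mathcal{D} = \prod_{n \geq 0} \mathcal{D}_n$ and $\mathcal{D}_{\geqslant 1} = \prod_{n \geq 1} \mathcal{D}_n$. The explicit formula $[X_f, X_g]_i = \sum_j (f_j \partial_j g_i - g_j \partial_j f_i)$ immediately gives $[\mathcal{D}_n, \mathcal{D}_m] \subset \mathcal{D}_{n+m}$, so $\mathcal{D}_{\geqslant 1}$ is a complete graded Lie algebra in the sense of Section 2. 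Any $X \in \mathcal{D}_{\geqslant 1}$ strictly raises $\mathfrak{m}$-adic valuation (where $\mathfrak{m}=(x_1,\ldots,x_\nu)$), and any $F \in \mathcal{G}_{\geqslant 1}$ satisfies $(F - \mathrm{Id})(\mathfrak{m}^k) \subset \mathfrak{m}^{k+1}$ by its multiplicativity and the identity-tangent condition on $F.x_i$. Hence both series $\exp(X) = \mathrm{Id} + \sum_{s\geq 1} X^s/s!$ and $\log(F) = \sum_{s\geq 1}(-1)^{s-1}(F-\mathrm{Id})^s/s$ converge termwise on $\mathbb{C}[[x]]$.

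Next I would verify that $\exp$ and $\log$ interchange derivations and substitution automorphisms. Using the Leibniz identity $X^s(AB) = \sum_k \binom{s}{k} X^k(A)\,X^{s-k}(B)$ valid for any derivation, a rearrangement of the double series gives $\exp(X)(AB) = \exp(X)(A)\,\exp(X)(B)$; since $X.x_i$ has valuation $\geq 2$, $\exp(X)$ is identity-tangent, so $\exp(X) \in \mathcal{G}_{\geqslant 1}$. For the reverse direction, I would view the complete filtered algebra of valuation-raising endomorphisms of $\mathbb{C}[[x]]$ as living inside a cocommutative Hopf framework in which $F$ is grouplike precisely when $F(AB)=F(A)F(B)$ and $X$ is primitive precisely when $X(AB)=X(A)B+AX(B)$; the universal fact that $\log$ sends grouplike to primitive in a complete cocommutative Hopf algebra then ensures that $\log(F)$ is a derivation. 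Mutual inverseness follows from the formal power-series identities $\exp\circ\log = \mathrm{Id}$ and $\log\circ\exp = \mathrm{Id}$ applied in this complete filtered setting, yielding the bijection $\exp : \mathcal{D}_{\geqslant 1} \to \mathcal{G}_{\geqslant 1}$.

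For the case $\exp : \mathcal{D} \to \mathcal{G}$, the degree-$0$ piece $\mathcal{D}_0$ consists of linear vector fields $\sum a_{ij} x_j \partial_{x_i}$; on this piece $\exp$ reduces to the matrix exponential $\mathfrak{gl}_\nu(\mathbb{C}) \to GL_\nu(\mathbb{C})$, and it combines with the bijection on $\mathcal{D}_{\geqslant 1}$ via a Baker--Campbell--Hausdorff argument in the completed enveloping algebra (the adjoint action of $\mathcal{D}_0$ preserves the grading on $\mathcal{D}_{\geqslant 1}$, so the resulting series converges). Injectivity is obtained on the domain where the matrix logarithm is well defined, recovering the linear part from the image in $\mathcal{G}$ and the higher-order part from the already-established bijection on $\mathcal{D}_{\geqslant 1}$. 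The main obstacle is the derivation property of $\log(F)$; the cleanest route is the Hopf-algebraic packaging, which replaces a tedious combinatorial expansion of $(F-\mathrm{Id})^s(AB)$ by the universal primitive-grouplike correspondence invoked throughout the paper.
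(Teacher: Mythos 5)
The paper does not actually prove this proposition: it cites Ilyashenko--Yakovenko, declares that ``there is no difficulty'', and only records the graduation of a monomial vector field and the Taylor-formula expansion of a substitution automorphism. Measured against that, your treatment of the pair $(\mathcal{D}_{\geqslant 1},\mathcal{G}_{\geqslant 1})$ is correct and supplies exactly what the paper leaves implicit: the grading by $|\alpha|-1$ on monomial fields $x^{\alpha}\partial_{x_i}$, the $\mathfrak{m}$-adic convergence of both series, Leibniz giving multiplicativity of $\exp(X)$, and the converse for $\log(F)$. One caveat on that converse: the algebra of valuation-raising endomorphisms of $\mathbb{C}[[x]]$ is not literally a cocommutative Hopf algebra, so you cannot quote the primitive/group-like correspondence of Section 2 verbatim. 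The honest version of your argument is the convolution identity $F\circ\mu=\mu\circ(F\otimes F)$ on the completed tensor square ($\mu$ the multiplication of $\mathbb{C}[[x]]$), from which $\log F\circ\mu=\mu\circ(\log F\otimes\mathrm{Id}+\mathrm{Id}\otimes\log F)$ follows because $F\otimes\mathrm{Id}$ and $\mathrm{Id}\otimes F$ commute; that is a routine repair.

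The genuine gap is the injectivity of $\exp$ on all of $\mathcal{D}$. You reduce the degree-zero piece to the matrix exponential $\mathfrak{gl}_{\nu}(\mathbb{C})\to\GL_{\nu}(\mathbb{C})$ and then assert injectivity ``on the domain where the matrix logarithm is well defined''; this is not injectivity on $\mathcal{D}$, and the matrix exponential is not injective. Concretely, for $\nu=1$ the field $X=2\pi i\,x\,\partial_x$ lies in $\mathcal{D}$ and satisfies $\exp(X).x^m=e^{2\pi i m}x^m=x^m$, so $\exp(X)=\mathrm{Id}_{\mathbb{C}[[x]]}=\exp(0)$. Hence your argument for the unrestricted statement cannot be completed as written --- and indeed the injectivity claim fails without an extra hypothesis on the linear part (a nilpotent linear part, or a spectrum condition selecting a branch of the logarithm). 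What you do establish rigorously is the bijection $\mathcal{D}_{\geqslant 1}\leftrightarrow\mathcal{G}_{\geqslant 1}$, which is the part of the proposition the rest of the paper actually uses; you should either restrict your claim to that case or flag the counterexample. A smaller point: to splice $\mathcal{D}_0$ and $\mathcal{D}_{\geqslant 1}$ you do not need a convergent BCH series --- it is enough to observe that the linear part of the diffeomorphism attached to $\exp(X_0+B)$ depends only on $X_0$, so $\exp(-X_0)\exp(X_0+B)$ lies in $\mathcal{G}_{\geqslant 1}$.
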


A proof can be found in {\cite{Ilya}} and we refer to \cite{MenCM} and
\cite{MenDE} for some applications of this operator formalism in dimension
$\nu = 1$. There is no difficulty in proving such a proposition. Let us just
notice that the graduation is related to the action of such derivation on
monomials : A vector field in $\mathcal{D}^{}$ (resp. $\mathcal{D}_{\geqslant
1}$) is given by a series of operators such as $c x_1^{n_1} \ldots
x_{\nu}^{n_{\nu}} \frac{\partial}{\partial x_i}$ with $n_1 + \ldots + n_{\nu}
\geqslant 1$ (resp. $n_1 + \ldots + n_{\nu} \geqslant 2$) that acts on
monomials  $x_1^{m_1} \ldots x_{\nu}^{m_{\nu}}$:
\[ \left( \left. c x_1^{n_1} \ldots x_{\nu}^{n_{\nu}} \frac{\partial}{\partial
   x_i} \right) . x_1^{m_1} \ldots x_{\nu}^{m_{\nu}} = x_1^{m_1 + n_1} \ldots
   x_i^{m_i + n_i - 1} \ldots x_{\nu}^{m_{\nu} + n_{\nu}} \right. \]
so that the total degree goes from $m_1 + \ldots + m_{\nu}$ to $m_1 + \ldots +
m_{\nu} + n_1 + \ldots + n_{\nu} - 1$ and the graduation for such an operator
is then $n_1 + \ldots + n_{\nu} - 1$.

Note also that this proposition implies that any substitution automorphism in
$\mathcal{G}_{\geqslant 1}$ is a differential operator. This could have been
proved directly, using the Taylor formula: If $\psi (x) = (x_1 + u_1 (x),
\ldots, x_{\nu} + u_{\nu} (x) = x + u (x)$, then
\[ F_{\psi} .A (x) = A (\psi (x)) = A (x + u (x)) = A (x) + \sum_{s \geq 1}
   \frac{1}{s!} \sum_{1 \leqslant i_1, \ldots, i_s \leqslant \nu} u_{i_1} (x)
   \ldots .u_{i_s} (x) \frac{\partial^s A (x)}{\partial x_{i_1} \ldots .
   \partial x_{i_s}} \]

If we look back at the (formal) flow of a vector field (or rather derivation)
$X$ in $\mathcal{D}^{}$ (resp. $\mathcal{D}_{\geqslant 1}$), the reader can
check that the associated one parameter family of substitution automorphisms
$F^t_X$ in $\mathcal{G}$ (resp. $\mathcal{G}_{\geqslant 1}$) is given by
$F^t_X = \exp (t X)$ and satisfies the operator equation
\[ \frac{\partial}{\partial t} . \text{$F^t_X$} = F^t_X .X \]
On the same way the homological equation can be formulated as follows : Let $X
= X_0 + B$ be a vector field of $\mathcal{D}$, with linear part $X_0$ and $B
\in \text{$\mathcal{D}_{\geqslant 1}$}$, if this vector field is conjugated by
$\psi$ to its linear part and $F = F_{\psi}$, then:
\[ F.X_0 = X.F = (X_0 + B) .F \]
and if $G$ is the inverse of $F$, this reads
\[ \tmop{ad}_{X_0} (G) = [X_0, G] = X_0 .G - G.X_0 = G.B \]
so that the homological equation is an equation relating an element of
$\mathcal{D}_{\geqslant 1}$ and an element of \ $\mathcal{G}_{\geqslant 1}$,
through the adjoint action of $X_0$: $B$ is a logarithmic derivative of $G$
for the derivation $d = \tmop{ad}_{X_0}$ (that respects the graduation). This is precisely the kind of equation we studied in the abstract context of
complete graded Lie algebras, equipped with a {\tmem{graded derivation}} $d$ \
that has essentially the same properties as in the case of the adjoint action
for vector fields. For example, if $X_0 = \sum_{i = 1}^{\nu} \lambda_i x_i
\frac{\partial}{\partial x_i}$, then
\[ d \left( \left. c x_1^{n_1} \ldots x_{\nu}^{n_{\nu}}
   \frac{\partial}{\partial x_i} \right) = \tmop{ad}_{X_0} \left( c x_1^{n_1}
   \ldots x_{\nu}^{n_{\nu}} \frac{\partial}{\partial x_i} \right) = (
   \left\langle \lambda, n \right\rangle - \lambda_i) c x_1^{n_1} \ldots
   x_{\nu}^{n_{\nu}} \frac{\partial}{\partial x_i} \right. \]
so that a vector field $X_0 + B$ is resonant if $d$ is not invertible on
$\mathcal{D}_{\geqslant 1}$. One can also notice in this case that, on
restriction to $\mathcal{D}_{\geqslant 1}$, we have $\mathcal{D}_{\geqslant 1}
= \ker d \oplus \tmop{Im} d$ and $\ker d$ is a sub-Lie algebra.

\bibliographystyle{plain}

%\bibliography{Biblio}

\end{document}